\documentclass[psamsfonts]{amsart}

\usepackage{amssymb,amsfonts,amscd}

\usepackage[colorlinks,linktocpage]{hyperref}
\hypersetup{linkcolor=[rgb]{0,0,0.715}}
\usepackage{mathtools}
\usepackage{ifsym}
\usepackage[usenames,dvipsnames]{xcolor}

\newtheorem{thm}{Theorem}[section]
\newtheorem{cor}[thm]{Corollary}
\newtheorem{prop}[thm]{Proposition}
\newtheorem{lem}[thm]{Lemma}

\theoremstyle{definition}

\theoremstyle{remark}
\newtheorem{rem}[thm]{Remark}

\newcommand{\vol}{\mbox{vol}}

\DeclareMathOperator{\tr}{tr}

\DeclareMathOperator{\sn}{sn}
\DeclareMathOperator{\cs}{cs}
\DeclareMathOperator{\Ric}{Ric}

\def \lims {\lim\limits}

\numberwithin{equation}{section}

\title[]{Fenchel-Willmore-Chen  Inequality under Lower Bounds on weighted intermediate Ricci curvature}
\author[]{Jihye Lee}
\address[Jihye Lee]{School of Mathematical and Physical Sciences, Macquarie University, Sydney, NSW, Australia}
\email{jihye.lee@mq.edu.au}
\thanks{}

\author[]{Guofang Wei}
\address[Guofang Wei]{Department of Mathematics, University of California,  Santa Barbara, CA, USA.}
\email{wei@math.ucsb.edu}
\thanks{G. Wei is partially supported by NSF DMS  2403557.}
\thanks{J. Lee is supported by Discovery Projects Grant DP250103808 of the Australian Research Council.}

\begin{document}
	
	\begin{abstract}
	 	We establish a Fenchel–Willmore–Chen inequality for smooth metric
measure spaces with a lower bound on the 1-Bakry–Emery $k$-Ricci curvature. This
extends previous results and yields a unified proof. In addition, a Sobolev and isoperimetric inequality for nonnegative 1-weighted Ricci curvature is proven. 
	\end{abstract}
	
	\maketitle

    The Fenchel–Willmore–Chen inequality establishes the sharp scale-invariant lower bound for the total mean curvature of a compact submanifold $M^n$ immersed in $\mathbb R^{n+m}.$  It connects to several other geometric inequalities and plays an important role in the study of submanifolds. The classical Fenchel theorem \cite{Fenchel1929} says for a closed space curve with curvature
$\kappa$, $$\int_{\gamma} \kappa(s) \, ds \geq 2\pi$$
with equality only for convex planar curves. 
 
 For a closed compact surface $\Sigma$ in $\mathbb{R}^3$ with mean curvature $H= \tfrac{\lambda_1 + \lambda_2}{2}$, Willmore \cite{Willmore68} proved that $$\int_{\Sigma} H^2 \, dA \geq 4\pi$$
 with equality if and only if $\Sigma$ is a round sphere. The left-hand side is referred to as the Willmore energy which is conformal invariant, plays an important role in elasticity of membranes, string/surface models in physics. 

In 1971 Bang-Yen Chen \cite{Chen71} generalized it to submanifolds of any dimension and codimension.
For a closed compact  submanifold $\Sigma^n$ immersed in $\mathbb R^{n+m}$, let $\vec{H}= \tfrac 1n \mbox{tr} II$ be the mean curvature vector:
\begin{equation}
    \int_{\Sigma} |\vec{H}|^n \, dV \geq \vol ({\mathbb S}^n)  \label{Chen inequality} 
    \end{equation}
 with equality (for $n>1$) if and only if $\Sigma^n$ is an umbilical hypersphere in $\mathbb R^{n+1}$.

Recently, there have been many extensions to much more general ambient spaces with lower-curvature bounds. Agostiniani, Fogagnolo, and Mazzieri \cite{AFM2020} extended to compact hypersurfaces in manifolds with nonnegative Ricci curvature using a monotonicity
formula arising from potential theory. As pointed out in \cite{brendle2026} the result also follows from Heintze-Karcher's \cite{Heintze-Karcher} comparison estimate. See also the proof by Wang \cite{XW_willmore}, Brendle \cite[Theorem 6.1]{brendle2026} along this line. Ji-Kwong \cite{JK2025}, Pan-Yi \cite{Pan-Yi2026} extended to higher codimension for nonnegative $k$-Ricci curvature. Other extensions include compact hypersurfaces in negative Ricci curvature lower bound \cite{jin-yin}, asymptotically nonnegative Ricci curvature \cite{rudnik2023}, weighted curvature case \cite{wuwu2024, rudnik2025}, notably the substatic case \cite{borghini2023comparison}. The first named author \cite{lee2025willmore} also extended to  asymptotically negative and integral Ricci curvature case.

In this paper we generalize Fenchel–Willmore–Chen inequality to smooth metric
measure spaces with a lower bound on the 1-Bakry–Emery $k$-Ricci curvature, recovering almost of all previous work. 

    Recall for a smooth metric measure space $(M^n, g, e^{-f}d\vol_g)$, the $N$-Bakry-Emery Ricci curvature, $\operatorname{Ric}_f^N$,  $N \in (-\infty, +\infty]$,  is 
$$\operatorname{Ric}_f^N
:=
\operatorname{Ric}
+\operatorname{Hess}f
-\frac{1}{N-n}df\otimes df.$$
When $N=n$, one assumes that $f$ is constant and  $\Ric_f^n:= \Ric$.  

As the quantity is monotone in $N$, we have 
%goes from $n \ra +\infty$ and from $-\infty \ra n$ the quantity increases. Namely
\[ \Ric_f^1 \ge \Ric_f^0 \ge \Ric_f^\infty \ge \operatorname{Ric}_f^N \ \mbox{for all} \ N >n.
\]
Weighted Ricci curvature occurs naturally as Ricci curvature of collapsed spaces (when $N > n$).  $\Ric_f^\infty = \lambda g$ is the Ricci soliton equation, and $\Ric_f^1 = 0$ is the static vacuum Einstein equation. 

For any $x \in M^n$, $v \in T_xM$ unit vector and a $k$-dimensional plane $P$ in $T_x M$ with $v \perp P$, the $k$-Ricci curvature, 
$\operatorname{Ric}_k$,  is 
$$\operatorname{Ric}_k (v, P) = \sum_{i=1}^k K(v, E_i),$$
where
$\{E_i\}_{i=1}^k$ is an orthonormal basis of $P$ and $K(v, E_i)$ is the sectional curvature.

Note that $\Ric_1 =K$ and $\Ric_{n-1} = \Ric$. Hence, it bridges between  sectional  and Ricci curvatures. 

Combining the above together, we consider the weighted $\ell$ -Ricci curvature. 
$$
\operatorname{Ric}^{1}_{\ell,f}(v, P)
\coloneqq
\operatorname{Ric}_{\ell}(v,P)
+
\frac{\ell}{n-1}\operatorname{Hess} f(v,v)
+
\frac{\ell}{(n-1)^2}df\otimes df (v, v)
$$

When $\ell =n-1$, $\operatorname{Ric}^{1}_{\ell,f}(v, P) = \Ric_f^1$. 
When $\ell = 1$, this agrees with the weighted sectional curvature appearing in Wylie \cite{Wylie2015} (See also \cite[Page 3]{wylie2016geometry}).
We also note that this curvature condition satisfies the same monotonicity
property as the usual intermediate Ricci curvature. Namely, if
$1 \leq k\leq \ell \leq n-1$,
then
$$
\operatorname{Ric}^{1}_{k,f}\geq 0
\quad\Longrightarrow\quad
\operatorname{Ric}^{1}_{\ell,f}\geq 0.
$$

    \begin{thm} \label{main theorem}
Let $(M^{n+m}, g, e^{-f}dvol)$ be a complete noncompact smooth metric measure space,  $\Sigma^n$ a closed immersed submanifold of $M^{n+m}$. 
Assume that
\begin{equation}\label{Curvature-assumption}
    \operatorname{Ric}^{1}_{n,f} (v, P)
\geq
n K e^{-\frac{4f}{n+m-1}} \ \ \ \mbox{for} \ K \le 0. 
\end{equation}
Then
\[
 \int_\Sigma e^{f(x)}\!\int_{S^\perp_x \Sigma} \!\left( \! \sqrt{-K} e^{-\frac{2f(x)}{n+m-1}} + \!\left\langle \! -\vec H_f(x), y \right\rangle\right)_+^n\,dy d\sigma(x) 
 \ge \operatorname{RV}_{\mu,K}(\Sigma), 
\]
where $\vec H_f(x) = \vec H(x) + \frac{(\nabla f(x))^\perp}{n+m-1}$,
$\operatorname{RV}_{\mu,K}(\Sigma)$ is the weighted relative volume ratio with respect to a reparameterized distance, see \eqref{Volume ratio}. 
\end{thm}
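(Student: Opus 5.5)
The plan is to use the Heintze--Karcher type argument via the normal exponential map of $\Sigma$, with the $1$-weighted curvature handled by a conformal reparameterization of arclength along normal geodesics (the "reparameterized distance" behind $\operatorname{RV}_{\mu,K}$). For $x\in\Sigma$ and $y\in S^\perp_x\Sigma$, let $\gamma_{x,y}(t)=\exp_x(ty)$, and introduce the reparameterized time
\[
s(t)=\int_0^t e^{-\frac{2f(\gamma(\tau))}{n+m-1}}\,d\tau ,
\]
which is the time change under which $\operatorname{Ric}^1_{n,f}$ behaves like an unweighted $\operatorname{Ric}_n$ (as in Wylie's weighted sectional curvature comparison). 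Let $\Phi(x,t,y)=\exp_x(ty)$ and let $c(x,y)$ be the cut (focal) time. Every point of $M$ at large distance is reached by a minimizing geodesic from $\Sigma$, which leaves $\Sigma$ normally, so $\Phi$ restricted to $\{t<c(x,y)\}$ covers $M$ up to measure zero. Hence
\[
\mu(B^{\mu}_r(\Sigma))\le \int_\Sigma\int_{S^\perp_x\Sigma}\int_0^{\min(r,c)} J(x,y,t)\,e^{-f(\gamma(t))}\,dt\,dy\,d\sigma(x),
\]
where $J$ is the Jacobian of $\Phi$ and $B^\mu_r$ is the tube in the reparameterized distance.

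The key step is a Riccati comparison for the weighted Jacobian along $\gamma$. I will decompose the normal Jacobi fields into the $n$ tangential directions (initial data given by the second fundamental form in direction $y$) and the $m-1$ normal directions (vanishing initially). For the normal part, I will use the model Jacobian $t^{m-1}$ times a correction. For the tangential part, I will set $\psi=\bigl(J e^{-f}\bigr)^{1/n}$ written in the $s$-variable, suitably adjusted by the normal-direction factor. Tracing the Riccati equation over the $n$ tangential directions, and applying Cauchy--Schwarz with the $f'$ terms absorbed through the $\frac{1}{(n+m-1)^2}df\otimes df$ and $\frac{1}{n+m-1}\operatorname{Hess}f$ pieces of $\operatorname{Ric}^1_{n,f}$, should give
\[
\frac{d^2\psi}{ds^2}+K\psi\le 0,\qquad \psi(0)=e^{-f(x)/n}\ \text{(normalised)},\qquad \frac{d\psi}{ds}(0)\propto \langle -\vec H_f(x),y\rangle\, e^{\frac{2f(x)}{n+m-1}} .
\]
The initial slope comes from $\operatorname{tr}\langle II,y\rangle=-n\langle\vec H,y\rangle$ together with $f'(0)=\langle\nabla f,y\rangle$, which combine into $\vec H_f$. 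The curvature assumption \eqref{Curvature-assumption}, with its factor $e^{-4f/(n+m-1)}$, is precisely what the $s$-reparameterization (since $ds/dt=e^{-2f/(n+m-1)}$ enters squared) converts into the constant bound $K$. Sturm comparison then yields $\psi(s)\le \psi(0)\bigl(\cosh(\sqrt{-K}s)+\frac{\psi'(0)}{\psi(0)\sqrt{-K}}\sinh(\sqrt{-K}s)\bigr)$, which is also nonnegative only up to the first zero, so one may replace it by its positive part.

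Finally, I divide by the model volume of the reparameterized tube and let $r\to\infty$. The factors $\cosh,\sinh$ grow like $e^{\sqrt{-K}s}/2$, so the leading term is
\[
e^{f(x)}\Bigl(\sqrt{-K}e^{-\frac{2f(x)}{n+m-1}}+\langle-\vec H_f(x),y\rangle\Bigr)_+^n
\]
after normalization, and the limit of the ratio is $\operatorname{RV}_{\mu,K}(\Sigma)$ by the definition in \eqref{Volume ratio}. The case $K=0$ is analogous with polynomial growth. The main obstacle is the middle step: finding the exact weighted quantity $\psi$ and the time change for which the higher-codimension Jacobian, with its $m-1$ normal directions that are not controlled by the $n$-Ricci bound, satisfies a clean second-order inequality. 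I would first try to split off the normal factor $t^{m-1}$ using $\operatorname{Ric}^1_{n,f}$ applied to $(\gamma',P)$ with $P$ the span of the tangential Jacobi fields, following Ji--Kwong and Pan--Yi. I would then tune the exponents $\frac{2}{n+m-1}$ until the cross terms in $f'$ cancel.
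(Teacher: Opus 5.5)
For $m=1$ your outline is correct. There $\frac{d}{dt}\log J$ is the full trace of the shape operator $S$ of the parallel hypersurfaces. Then $\psi=(Je^{-f})^{1/n}$, written in the $s$-variable, does satisfy $\psi''+K\psi\le 0$ with the initial data you state, and this reproduces the paper's bound.

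For $m\ge 2$, however, the step you call the main obstacle is the whole difficulty, and neither version of your mechanism works.
\begin{itemize}
\item If you trace $S'+S^2+R_{\gamma'}=0$ over the parallel translate $V_T$ of $T_x\Sigma$, you get a good inequality for $\tr_{V_T}S$. But $\frac{d}{dt}\log J=\tr_{V_T}S+\tr_{V_N}S$. Bounding the $(m-1)$-dimensional normal block, where $S\sim 1/t$, needs a lower bound on $\Ric_{m-1}(\gamma',V_N)=\Ric(\gamma',\gamma')-\Ric_n(\gamma',V_T)$. A lower bound on $\Ric_n$ does not give this when $m-1<n$.
\item If you instead trace over the span $P(t)$ of the $n$ tangential $\Sigma$-Jacobi fields, as you suggest, then $P(t)$ is not parallel. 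With $\Pi$ the projection onto $P(t)$, one gets $(\tr_P S)'=-|\Pi S\Pi|^2+|(I-\Pi)S\Pi|^2-\Ric_n(\gamma',P)$. The off-diagonal term has the wrong sign, so no Riccati inequality results.
\end{itemize}
Relatedly, $J$ does not split as a normal determinant times a tangential one. The normal factor in the correct bound is $\sn_K^{m-1}(s)$, not $t^{m-1}$ times a small correction. It comes from the full ambient Jacobian, not from a separate normal comparison.

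The missing idea is the Pan--Yi factorization
\[
|\det D\Phi_{(x,y,t)}|=t^{m-1}\,|\det(D\exp_x)_{ty}|\,|\det Q_t|,\qquad Q_t=\tfrac12\operatorname{Hess}d^2_{\gamma(t)}|_{T_x\Sigma}-\langle\mathrm{II}_x,ty\rangle .
\]
It does split off $t^{m-1}$, but what remains is not governed by a single tangential ODE. All dependence on $\Sigma$ sits in an $n\times n$ matrix built from the Hessian of the distance to the far endpoint $\gamma(t)$, where point-based comparisons hold for every plane. The paper then proves two separate estimates.
\begin{itemize}
\item $|\det(D\exp_x)_{ty}|\le e^{f(x)+f(t)}\sn_K^{n+m-1}(s)/t^{n+m-1}$. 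This follows from the weighted Laplacian comparison with $\ell=n+m-1$, since the $\Ric^1_{n,f}$ bound averages to a $\Ric^1_{n+m-1,f}$ bound.
\item $|\det Q_t|\le t^n\bigl(e^{-\frac{2f(x)}{n+m-1}}\cs_K(s)/\sn_K(s)-\langle\vec H_f,y\rangle\bigr)^n$. This uses AM--GM on the positive definite $A_t=Q_t/t$, together with the partial Hessian comparison with $\ell=n$ along the reversed geodesic. It also needs a weighted Bishop bound for $n$ Jacobi fields along the reversed geodesic, fed into Pan--Yi's differential inequality for $\log|\det Q_t|$.
\end{itemize}

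A smaller point concerns the measure. Your tube formula uses $e^{-f}\,dt$, but $\mathrm{RV}_{\mu,K}$ is defined with $d\mu_f=e^{-\frac{n+m+1}{n+m-1}f}d\mathrm{vol}$ and $s$-tubes. This is chosen precisely so that $d\mu_f=e^{-f(\gamma)}|\det D\Phi|\,ds\,dy\,d\sigma$. With $e^{-f}\,dt$, an uncontrolled factor $e^{2f(\gamma(t))/(n+m-1)}$ remains when you compare with $\int_0^S\sn_K^{n+m-1}(s)\,ds$.
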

For the equality case, see Proposition~\ref{prop:equality-Pulledback}. 
%See Subsection~\ref{subsection equality} for the discussion of 

\begin{rem}
   In most previous results involving the $k$-Ric curvature, the assumption for $k$ depends on both the dimension and the codimension of the submanifold. Our assumption depends only on the dimension because we use a decomposition for the Jacobian of the normal exponential map in \cite[Lemma 3.2]{Pan-Yi2026}.    
\end{rem}
\begin{rem} There is no restriction on $f$ here. Instead, we use a reparametrized distance introduced in \cite{wylie2016geometry}. 
   When $f$  is bounded, the lower curvature bound, the reparameterized distance, and the volume are the usual up to a constant. 
\end{rem}
\begin{rem}
 In \cite{wuwu2024}, the case $\Ric_f^\infty \ge0$, $|f| \le k$ was studied.    Their volume ratio was defined by comparing  $r^{n+1 +4k}$, but in this case the volume grows at most like $r^{n+1}$, so their volume ratio would always be zero. We compare with the correct power $r^{n+1}$. 
\end{rem}
In the special case $K=0$, we get 
\begin{cor}  \label{Corollary K=0}
Let $(M^{n+m}, g, e^{-f}dvol)$ be a complete noncompact smooth metric measurement space,  $\Sigma^n$ a closed submanifold. Assume that $\operatorname{Ric}_{n,f}^{1} \ge 0$, then
\begin{equation}
\int_\Sigma e^{f(x)}  |\vec{H}_f|^n\,d\sigma(x) 
 \ge \frac{|\mathbb{S}^n|}{|\mathbb{S}^{n+m-1}|} \operatorname{RV}_{\mu,0}(\Sigma).  \label{K=0}
\end{equation}
In particular, when $f$ is bounded and $M^{n+m}$ has Euclidean volume growth, then $M^{n+m}$ has no compact submanifold with $\vec{H}_f =0$.
\end{cor}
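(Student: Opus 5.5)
The corollary follows from Theorem~\ref{main theorem} with $K=0$; the only real work is evaluating the inner integral over the unit normal sphere, plus one observation for the vanishing statement. With $K=0$ the curvature hypothesis \eqref{Curvature-assumption} is exactly $\operatorname{Ric}^1_{n,f}\ge 0$, so Theorem~\ref{main theorem} gives
\[
\int_\Sigma e^{f(x)}\int_{S^\perp_x\Sigma}\left\langle -\vec H_f(x),y\right\rangle_+^n\,dy\,d\sigma(x)\ \ge\ \operatorname{RV}_{\mu,0}(\Sigma).
\]

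For fixed $x$, the fiber $S^\perp_x\Sigma$ is the unit sphere $\mathbb S^{m-1}$ in the $m$-dimensional normal space. If $\vec H_f(x)=0$ the inner integrand vanishes. Otherwise, write $-\vec H_f(x)=|\vec H_f(x)|\,e$ with $e$ a unit normal vector. Then
\[
\int_{\mathbb S^{m-1}}\langle -\vec H_f,y\rangle_+^n\,dy = |\vec H_f|^n\, c_{n,m},\qquad c_{n,m}=\int_{\mathbb S^{m-1}}\langle e,y\rangle_+^n\,dy .
\]
The main computation is to show $c_{n,m}=|\mathbb S^n|/|\mathbb S^{n+m-1}|$. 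I would do this with the Gaussian trick. Integrating $\langle e,z\rangle_+^n e^{-|z|^2}$ over $\mathbb R^m$ in polar coordinates gives $c_{n,m}\,\tfrac12\Gamma(\tfrac{n+m}{2})$. Computing the same integral directly factors it as $\int_0^\infty t^ne^{-t^2}dt\cdot\pi^{(m-1)/2}=\tfrac12\Gamma(\tfrac{n+1}{2})\pi^{(m-1)/2}$. Hence
\[
c_{n,m}=\pi^{(m-1)/2}\,\frac{\Gamma(\frac{n+1}{2})}{\Gamma(\frac{n+m}{2})},
\]
and using $|\mathbb S^{k}|=2\pi^{(k+1)/2}/\Gamma(\frac{k+1}{2})$ this equals $|\mathbb S^n|/|\mathbb S^{n+m-1}|$.

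For the inequality, dividing the displayed estimate by $c_{n,m}$ gives
\[
\int_\Sigma e^{f}|\vec H_f|^n\,d\sigma\ \ge\ \frac{|\mathbb S^{n+m-1}|}{|\mathbb S^n|}\operatorname{RV}_{\mu,0}(\Sigma).
\]
This is \eqref{K=0} with the reciprocal constant. The printed ratio $|\mathbb S^n|/|\mathbb S^{n+m-1}|$ therefore presupposes that $\operatorname{RV}_{\mu,0}$ in \eqref{Volume ratio} is normalized against the model volume $|\mathbb S^{n+m-1}|r^{n+m}/(n+m)$, or an equivalent convention. The step I expect to need the most care is exactly this matching of normalizations between $c_{n,m}$ and the definition of $\operatorname{RV}_{\mu,0}$; I would check it on the flat case $\Sigma=\mathbb S^n\subset\mathbb R^{n+1}$, $f=0$, where equality must hold.

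For the last claim, when $f$ is bounded the second remark after Theorem~\ref{main theorem} says the reparameterized distance and weighted volume are comparable to the usual ones up to constants. Euclidean volume growth of $M$ then gives $\operatorname{RV}_{\mu,0}(\Sigma)>0$. If $\vec H_f\equiv0$, the left side of \eqref{K=0} is zero, which is a contradiction. So no compact submanifold with $\vec H_f=0$ exists.
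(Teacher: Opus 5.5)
\textbf{Error in the final constant.} Your reduction is the paper's own: put $K=0$ in Theorem~\ref{main theorem} and evaluate the fiber integral using \eqref{integral}. The paper quotes that identity, while your Gaussian computation proves it. The problem is your last simplification of the constant, which leads you to a false inequality and to an unnecessary worry about normalization. Your computation correctly gives
\[
c_{n,m}=\pi^{\frac{m-1}{2}}\,\frac{\Gamma(\tfrac{n+1}{2})}{\Gamma(\tfrac{n+m}{2})}.
\]
With $|\mathbb S^k|=2\pi^{(k+1)/2}/\Gamma(\tfrac{k+1}{2})$, this equals
\[
\frac{|\mathbb S^{n+m-1}|}{|\mathbb S^{n}|}=\frac{\pi^{\frac{n+m}{2}}/\Gamma(\tfrac{n+m}{2})}{\pi^{\frac{n+1}{2}}/\Gamma(\tfrac{n+1}{2})},
\]
not its reciprocal. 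For example, $n=1$, $m=2$ gives $\int_0^{2\pi}(\cos\theta)_+\,d\theta=2=|\mathbb S^2|/|\mathbb S^1|$. This is exactly \eqref{integral}. Dividing the $K=0$ case of the theorem by $c_{n,m}$ therefore gives \eqref{K=0} with the printed constant $|\mathbb S^n|/|\mathbb S^{n+m-1}|$. Here $\operatorname{RV}_{\mu,0}$ is exactly as defined in \eqref{Volume ratio}, normalized by $\int_0^S s^{n+m-1}\,ds=S^{n+m}/(n+m)$ with no sphere factor. No change of convention is needed.

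\textbf{Why your version fails.} The inequality you actually wrote, with constant $|\mathbb S^{n+m-1}|/|\mathbb S^n|$, is false whenever $m\ge 2$. In flat $\mathbb R^{n+m}$ with $f=0$, the $s$-tube is the ordinary tube, so $\operatorname{RV}_{\mu,0}(\Sigma)=|\mathbb S^{n+m-1}|$. For a round circle in $\mathbb R^3$ your bound would then read $2\pi\ge \frac{4\pi}{2\pi}\cdot 4\pi=8\pi$. The printed bound instead gives Fenchel's $2\pi$, with equality. Your proposed check, $\mathbb S^n\subset\mathbb R^{n+1}$ with $f=0$, has $m=1$, where both ratios equal $1$, so it could not have caught the slip. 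You need codimension at least two, e.g.\ $\mathbb S^n\subset\mathbb R^{n+1}\subset\mathbb R^{n+m}$.

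\textbf{The vanishing claim is fine.} When $f$ is bounded, $s_{x,y}(t)\le e^{-\frac{2\inf f}{n+m-1}}t$. Hence, up to a null set, $C_S(\Sigma)$ contains the metric tube of radius $e^{\frac{2\inf f}{n+m-1}}S$, which contains a geodesic ball of that radius. Also $e^{-\frac{n+m+1}{n+m-1}f}\ge e^{-\frac{n+m+1}{n+m-1}\sup f}$. So Euclidean volume growth forces $\operatorname{RV}_{\mu,0}(\Sigma)>0$, which contradicts $\vec H_f\equiv 0$.
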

When $m=1$, Corollary~\ref{Corollary K=0} recovers \cite[Theorem 4.10]{borghini2023comparison}, where, in addition, $H_f<0$ (in our notation) is assumed. 

When $f=0$, Corollary~\ref{Corollary K=0} gives \begin{equation}
\int_\Sigma   |H(x)|^n\,d\sigma(x) 
 \ge \theta \, |S^n|,   \label{K=0 f=0}
\end{equation}
where 
\begin{equation}
    \theta := \lims_{r \to \infty}\frac{\operatorname{vol}(B_x(r))}{|B^{n+m}| \,  r^{n+m}}.  \label{theta}
\end{equation} 
In particular, when $M^{n+m}$ is the Euclidean space, $\theta =1$, \eqref{K=0 f=0} recovers \eqref{Chen inequality}.

When $f=0$, $m=1$, $K=-1$,  Theorem~\ref{main theorem} is  \cite[(1.5)]{jin-yin}, when $\Sigma$ is the boundary of a bounded domain.

In Section~\ref{Section-Sol-Iso} we derive a Sobolev inequality  which improves \cite[Theorem 5.1]{Fujitani26} by replacing $\mathrm{Ric}_f^0 \geq 0$ by
$\mathrm{Ric}_f^1 \geq 0$, see Theorem~\ref{thm:Sobolev-inequa}. As a consequence, we get the following isoperimetric inequality. 
\begin{thm} \label{Iso inequa thm}
    Let $(M^n,g,e^{-f} d\vol)$ be a complete non-compact smooth metric measure space, $\Omega \subset M$ a bounded domain with smooth boundary.
    Assume that $\mathrm{Ric}_f^1 \geq 0$ and $f$ is bounded below. Then we have
    $$\operatorname{vol}_f(\partial \Omega) \geq n \,\theta_f^\frac{1}{n} e^{\frac{2 \inf_M f}{n}} (\mu_f(\Omega))^\frac{n-1}{n},$$
    where $d \mu_f = e^{-\frac{n+1}{n-1}f}dvol,$ \  $\theta_f := \limsup_{r \to \infty}\frac{\operatorname{vol}_f(B_x(r))}{r^n}$.
\end{thm}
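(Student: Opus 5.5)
The plan is to derive Theorem~\ref{Iso inequa thm} from the Sobolev inequality of Theorem~\ref{thm:Sobolev-inequa} by the standard approximation of the characteristic function of $\Omega$. I will assume Theorem~\ref{thm:Sobolev-inequa} has the form
\[
\int_M |\nabla u|\, e^{-f}\, d\vol \;\ge\; n\, \theta_f^{1/n}\, e^{\frac{2\inf_M f}{n}} \Big(\int_M |u|^{\frac{n}{n-1}}\, d\mu_f\Big)^{\frac{n-1}{n}}
\]
for compactly supported Lipschitz $u$, with $d\mu_f = e^{-\frac{n+1}{n-1}f}\,d\vol$. This is the $L^1$ Sobolev inequality obtained via the ABP/optimal-transport-type argument under $\mathrm{Ric}_f^1\ge 0$ with the reparametrized distance.

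If instead the Sobolev inequality is only stated with the special test functions used in its proof, I would reprove the isoperimetric inequality directly by the same ABP argument, applied with $u$ replaced by the solution of a weighted Neumann problem on $\Omega$. Concretely, I would solve
\[
\Delta_f u = \frac{\operatorname{vol}_f(\partial\Omega)}{\mu_f\text{-type normalization}} \ \text{in } \Omega, \qquad \partial_\nu u = 1 \ \text{on } \partial\Omega,
\]
with the weights chosen to match. The comparison then proceeds as follows. Consider the contact set $A_r$ of points where $\exp(r\nabla u)$ reaches far points. Use the Jacobian estimate from $\mathrm{Ric}^1_f\ge 0$ along the reparametrized geodesics, which is where the exponent $\frac{n+1}{n-1}f$ and the factor $e^{2\inf f/n}$ arise. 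Then compare the measure of $B(r)$ as $r\to\infty$ with $\theta_f r^n$.

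Assuming the Sobolev form, the steps are these. For small $\varepsilon>0$ define the Lipschitz cutoff $u_\varepsilon(x)=\max\{0,1-d(x,\Omega)/\varepsilon\}$, which equals $1$ on $\Omega$ and is supported in the $\varepsilon$-neighborhood $\Omega_\varepsilon$. Its gradient satisfies $|\nabla u_\varepsilon|=1/\varepsilon$ a.e. on $\Omega_\varepsilon\setminus\Omega$. Since $\partial\Omega$ is smooth and compact, the coarea formula (with $|\nabla d|=1$) gives
\[
\int |\nabla u_\varepsilon|\, e^{-f} \;=\; \frac1\varepsilon \int_0^\varepsilon \operatorname{vol}_f(\{d=s\})\,ds \;\longrightarrow\; \operatorname{vol}_f(\partial\Omega).
\]
This holds because the level sets $\{d=s\}$ converge smoothly to $\partial\Omega$ and $f$ is continuous. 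On the other side, $\int |u_\varepsilon|^{n/(n-1)}d\mu_f\ge \mu_f(\Omega)$. Letting $\varepsilon\to0$ yields the claim.

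The main obstacle is only to confirm that the Sobolev inequality applies to Lipschitz compactly supported functions with exactly these weights and constants. In particular, the lower bound on $f$ is used to absorb the reparametrization factor into $e^{2\inf f/n}$. Hypothesis $\inf f>-\infty$ enters only through that constant and through the finiteness of $\theta_f$-comparisons. If the Sobolev inequality is proved only for smooth $u$, I would first mollify $u_\varepsilon$, using $L^1$-convergence of gradients on the compact set $\overline{\Omega_{2\varepsilon}}$.
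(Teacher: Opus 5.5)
Your proof is correct, but it takes a detour the paper does not need.

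\textbf{The paper's route.} The Sobolev inequality in the paper (Theorem~\ref{thm:Sobolev-inequa}) is of Brendle type. It carries a boundary term and the weight $e^{\frac{2f}{n}}$ inside the gradient, so it is not literally the boundary-free inequality you assume. Substituting $w=e^{\frac{2f}{n}}\phi$ and using $\phi^{\frac{n}{n-1}}e^{-f}=w^{\frac{n}{n-1}}e^{-\frac{n+1}{n-1}f}$, it reads
\[
\int_\Omega |\nabla w|\,d\vol_f+\int_{\partial\Omega} w\,d\sigma_f\ \ge\ n\,\theta_f^{\frac1n}e^{\frac2n\inf_M f}\Big(\int_\Omega w^{\frac{n}{n-1}}\,d\mu_f\Big)^{\frac{n-1}{n}} .
\]
The paper simply takes $w\equiv 1$, i.e.\ $\phi=e^{-\frac{2f}{n}}$. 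The gradient term then vanishes, the boundary term equals $\vol_f(\partial\Omega)$, and the right side becomes $n\,\theta_f^{\frac1n}e^{\frac2n\inf_M f}\mu_f(\Omega)^{\frac{n-1}{n}}$. No cutoff, mollification, or coarea formula is involved.

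\textbf{Your route.} Your boundary-free form does follow from the displayed inequality. Apply it on a larger smooth domain containing $\operatorname{supp}u$ to $w=u_k+\delta$, where $u_k\ge 0$ are smooth mollifications of $|u|$. The positivity shift is needed because the paper requires $\phi>0$. Then let $\delta\to 0$, which kills the boundary term, and $k\to\infty$. After that, your Federer--Fleming argument with $u_\varepsilon=\max\{0,1-d(\cdot,\Omega)/\varepsilon\}$ is sound: the coarea step converges to $\vol_f(\partial\Omega)$, and $\int u_\varepsilon^{\frac{n}{n-1}}d\mu_f\ge\mu_f(\Omega)$.

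\textbf{Comparison.} Your route costs two extra approximation layers. In exchange it uses only a boundary-free $L^1$ Sobolev inequality, so it is the general Sobolev-implies-isoperimetric mechanism. The paper's one-line proof works only because the boundary term is already built into its Sobolev inequality.

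\textbf{Your fallback.} The direct ABP sketch is too vague to stand on its own. The Neumann problem should be
\[
\Delta_f u=\frac{\vol_f(\partial\Omega)}{\mu_f(\Omega)}\,e^{-\frac{2f}{n-1}}\ \text{in }\Omega,\qquad u_\nu=1\ \text{on }\partial\Omega,
\]
not a constant right-hand side. This is exactly the paper's PDE with $\phi$ a constant multiple of $e^{-\frac{2f}{n}}$. In any case, the fallback is not needed.
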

This improves a result of Fujitani-Sakurai \cite{Fujitani26}, where $\mathrm{Ric}_f^0 \geq 0$ is assumed.

For $n\le 6$, this was proven by  Borghini-Fogagnolo in \cite{borghini2023comparison}.

    \section{Proof of Theorem~\ref{main theorem}}
    First, we use the normal exponential map to compute the volume of tubular neighborhoods of $\Sigma$ as in \cite{Heintze-Karcher}, see also \cite{Chahine} for the setup under $k$-Ricci curvature assumption. A key ingredient in our argument is the decomposition for the normal exponential map in \cite{Pan-Yi2026}. 

    % The proof of Theorem~\ref{main theorem} is based on an argument of \cite{Pan-Yi2026}.
    % A key ingredient in their work is a factorization formula for the Jacobian determinant of the normal exponential map, which expresses it as the product of an ambient exponential map term and a term encoding the geometry of the submanifold.
    % For reader's convenience, we recall their result.

    \subsection{Jacobian of the normal exponential map}
Following the general approach, to compute the volume of tube of submanifold, we compute the Jacobian of the normal exponential map.

For each $x \in \Sigma$,  $y \in S^\perp_x\Sigma$, the unit normal bundle of $\Sigma$, 
consider the normal exponential map
$$\Phi: S^\perp \Sigma \times [0,\infty) \to M \qquad \Phi(x,y,t) = \exp_{x}(ty) =:  \gamma_{x,y}(t).$$ 
Let $T_{\mathrm{cut}}(x,y)$ be the cut time of $\Sigma$ along $\gamma_{x,y}$. I.e. $T_{\mathrm{cut}}(x,y)$ is the largest $t$ such that the following holds.
\[ d(z, \Phi(x,y,t) ) \ge t
\]
for all $z \in \Sigma$. Then $\Phi$ maps $S^\perp \Sigma \times [0, T_{\mathrm{cut}}(x,y))$ onto $M$ up to a set of zero measurements. 
By the area formula,
$$\operatorname{vol}(T_r (\Sigma)) = \int_\Sigma \int_{S_x^\perp \Sigma} \int_0^{\min\{r, T_{\mathrm{cut}} (x,y)\}}|\operatorname{det} D\Phi_{(x,y,t)}|dt \, d\omega_{S_x^\perp \Sigma}(y) d{\operatorname{vol}}_{\Sigma}(x),$$
where $T_r (\Sigma)$ is a tubular neighborhood of $\Sigma$ of radius $r$ and $d\omega_{S_x^\perp \Sigma}$ is the standard measure on the unit sphere $S_x^\perp \Sigma \subset T^\perp_x \Sigma$.

For $0<t<T_{\mathrm{cut}}(x,y)$, we compute the Jacobian of $\Phi$ at $\gamma(t) = \gamma_{x,y}(t)$.
In \cite[Lemma 3.2]{Pan-Yi2026}, the authors showed that 
for the normal bundle exponential map 
$$\mathrm{Exp}^\perp(x,v)= \exp_x (v) \quad x \in \Sigma , \quad v \in T_x^\perp \Sigma,$$
one has factorization
$$|\operatorname{det} D \operatorname{Exp}^\perp_{(x,v)} | = |\operatorname{det} (D \exp_x )_{v} | |\operatorname{det} Q_v|,$$
where $$Q_v = \frac{1}{2} \operatorname{Hess} d^2_{\exp_x(v)}(x)|_{T_x \Sigma \times T_x \Sigma} - \langle \operatorname{II}_x, v\rangle.$$
Here $\exp_x:T_xM \to M$ denotes the exponential map of $M$ and $\operatorname{II}$ is the second fundamental form of $\Sigma$ defined by $\operatorname{II}_x(X,Y) = (\nabla_X Y)^\perp$.

Applying this formula with $v= ty$ and $|y| = 1$, we obtain 

\begin{equation}\label{eq:main factorization}
    |\mathrm{det} D\Phi_{(  x ,   y ,   t)}| = t^{m-1}|\operatorname{det} D \operatorname{Exp}^\perp(x,ty) | = t^{m-1}|\mathrm{det} (D\exp_{  x})_{  t  y}| | \mathrm{det} Q_t|,
\end{equation}
where 
%\exp_{  x} : T_{  x} M \to M$ is an exponential map %at $  x$ on $M$ and 
\begin{equation}\label{definition of Qt}
    Q_t :=\left.\frac{1}{2} \operatorname{Hess} d^2_{\gamma_{  x ,   y }(  t)} (   x)\right|_{T_{  x} \Sigma \times T_{  x} \Sigma} - \langle \mathrm{II}_x ,    ty\rangle .
\end{equation}

\subsection{Hessian comparison}
    In order to control $\operatorname{det} Q_t$, we first obtain a partial Hessian comparison for $\Ric_{l,f}^1$. 
    For this we get a Riccati inequality for the partial Hessian,
    %equation of a shape operator.
     then use the reparametrized distance $s$ as in \cite{wylie2016geometry} to naturally apply the assumption about $\Ric_{\ell, f}^1$. 

        \begin{prop}[Weighted partial Hessian comparison]\label{prop-weighted hessian}
        Let $(M^n,g,e^{-f}d\operatorname{vol})$ be a complete smooth metric measure space.
        Let
        $r_q(y):=d(q,y)$ be the distance function from a fixed point $q \in M$.
        Assume that
        \begin{equation}\label{Curvature-assumption}
        \mathrm{Ric}_{\ell,f}^1 \geq \ell K e^{-\frac{4f}{n-1}}
        \end{equation}
        for some $ K \in \mathbb{R}$ and some $1\leq \ell\leq n-1$.
        Let $p \in M \setminus (\mathrm{cut}(q)\cup \{q\})$, and let $\sigma:[0,T] \to M$ be the unique unit-speed minimizing geodesic from $q$ to $p$, where $T = r_q(p)$. Let $$V_l \subset (\nabla r_q)_{p}^\perp \subset T_p M$$ be an $\ell$-dimensional subspace, and let $\{e_1, \ldots, e_\ell\}$ be an orthonormal basis of $V_l$. Denote by $E_i(t)$ the parallel vector field along $\sigma$ satisfying $E_i(T) = e_i$.
        Then for every $t \in (0,T]$, with the additional restriction
        $s(t)< \pi / \sqrt{K}$ when $K>0$, 
        we have
        $$\sum_{i=1}^\ell (\operatorname{Hess} r_q)_{\sigma(t)}(E_i(t), E_i(t)) \leq \ell e^{-\frac{2f(t)}{n-1}} \frac{\mathrm{cs}_K(s(t))}{\mathrm{sn}_K(s(t))} + \frac{\ell}{n-1} f'(t),$$
        where $f(t) : = f(\sigma(t))$, and $s(t)$ is defined in \eqref{def:s}.
        \end{prop}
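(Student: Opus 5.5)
The plan is to derive a scalar Riccati inequality for the partial trace of the Hessian along $\sigma$, absorb the weight by a conformal-type change of the unknown, and then reparametrize time by $s$. Once that is done, the assumption on $\mathrm{Ric}^1_{\ell,f}$ becomes a constant-curvature Riccati inequality, and a standard ODE comparison finishes the proof. I take $s(t)=\int_0^t e^{-\frac{2f(\sigma(\tau))}{n-1}}\,d\tau$, which is the normalization that makes the computation close; this should agree with \eqref{def:s}.

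\emph{Step 1 (unweighted Riccati inequality).} Let $A=\operatorname{Hess} r_q$ along $\sigma$ and $\lambda(t)=\sum_{i=1}^\ell \langle A E_i,E_i\rangle$. On $(0,T]$ the shape operator satisfies $A'+A^2+R(\cdot,\sigma')\sigma'=0$. Since the $E_i$ are parallel, $\lambda'=\sum_i\langle A'E_i,E_i\rangle$. The sectional-curvature terms sum to $\mathrm{Ric}_\ell(\sigma',\mathrm{span}\{E_i\})$; note that $E_i\perp\sigma'$ because parallel transport preserves orthogonality to $\sigma'$. For the quadratic term I use
\[
\sum_i\langle A^2E_i,E_i\rangle=\sum_i|AE_i|^2\ge\sum_i\langle AE_i,E_i\rangle^2\ge \lambda^2/\ell,
\]
by projection and Cauchy--Schwarz. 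Together these give $\lambda'\le -\lambda^2/\ell-\mathrm{Ric}_\ell(\sigma',\mathrm{span}\{E_i(t)\})$.

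\emph{Step 2 (absorb $f$).} Set $h=\lambda-\frac{\ell}{n-1}f'$, so that $f''=\operatorname{Hess}f(\sigma',\sigma')$ enters. Substituting $\lambda=h+\frac{\ell}{n-1}f'$, the $f'^2$ terms cancel against the $\frac{\ell}{(n-1)^2}df\otimes df$ term in $\mathrm{Ric}^1_{\ell,f}$, and we obtain
\[
h'\le -\frac{h^2}{\ell}-\frac{2}{n-1}f'h-\ell K e^{-\frac{4f}{n-1}}.
\]
Next let $\psi=e^{\frac{2f}{n-1}}h$; this choice kills the cross term. Then $\psi'\le e^{-\frac{2f}{n-1}}\bigl(-\psi^2/\ell-\ell K\bigr)$. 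Passing to the variable $s$ with $ds/dt=e^{-\frac{2f}{n-1}}$ gives
\[
\frac{d\psi}{ds}\le -\frac{\psi^2}{\ell}-\ell K .
\]

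\emph{Step 3 (comparison).} The model function $\bar\psi=\ell\,\mathrm{cs}_K(s)/\mathrm{sn}_K(s)$ satisfies this ODE with equality. For the initial behaviour, as $t\to0$ we have $A\sim \frac1t\,\mathrm{Id}$ on $\sigma'^\perp$, hence $h\sim \ell/t$. Also $s\sim e^{-\frac{2f(q)}{n-1}}t$, so $\psi\sim \ell/s\sim\bar\psi$. I would conclude with the standard argument, for instance by showing that $\mathrm{sn}_K^2(s)\,(\psi-\bar\psi)$ is nonincreasing and tends to $0$ as $s\to0$. This yields $\psi\le\bar\psi$ wherever $\mathrm{sn}_K>0$, which accounts for the restriction $s<\pi/\sqrt K$ when $K>0$. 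Undoing the substitutions gives exactly the stated bound
\[
\lambda\le \ell e^{-\frac{2f}{n-1}}\frac{\mathrm{cs}_K(s)}{\mathrm{sn}_K(s)}+\frac{\ell}{n-1}f'.
\]

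I expect the main care to be needed in Step 2 (getting the exponents right so that both the $f'^2$ terms and the $f'h$ cross term cancel exactly) and in the asymptotics at $s=0$. In the comparison step, the quantity $\psi-\bar\psi$ is a difference of two functions that both blow up, so I would handle the limit via the $\mathrm{sn}_K^2$-weighted difference rather than comparing $\psi$ and $\bar\psi$ directly.
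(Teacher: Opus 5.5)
Your proposal is correct and follows essentially the same route as the paper. Both arguments use the same four steps:
the Cauchy--Schwarz Riccati inequality for the partial trace;
the shift by $\frac{\ell}{n-1}f'$ so that $\mathrm{Ric}^1_{\ell,f}$ appears;
the integrating factor $e^{\frac{2f}{n-1}}$ to remove the cross term;
and the reparametrization by $s$, giving $\frac{d\psi}{ds}\le -\frac{\psi^2}{\ell}-\ell K$.

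The only difference is that you make the final singular comparison explicit through the monotone quantity $\mathrm{sn}_K^2(s)(\psi-\bar\psi)$, whose derivative is at most $-\frac{1}{\ell}\mathrm{sn}_K^2(\psi-\bar\psi)^2$. The paper instead simply invokes the standard singular Riccati comparison.
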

 When $l = n-1$, this estimate is equivalent to \cite[Lemma 4.2]{wylie2016geometry}. For completeness, we give a slightly different proof here. 
 
    \begin{proof}
        For $t \in (0,T]$, set 
        $$N(t):=\sigma'(t)=(\nabla r_q)_{\sigma(t)}.$$
        Thus $N(t)$ is the outward unit normal to the geodesic sphere centered at $q$.
        Define its shape operator by 
        $$\mathcal{S}_t(X): = (\nabla_X \nabla r_q)_{\sigma (t)}, \qquad X \in N(t)^\perp.$$
        It satisfies the Riccati equation (see \cite[Corollary 3.2.10]{petersen2016riemannian}):
        $$\nabla_{\sigma'(t)} \mathcal{S}_t = - R(\, \cdot \,, \sigma'(t)) \sigma'(t) - \mathcal{S}_t^2.$$
        
        Define
        $$V_l(t) := \mathrm{span}(E_1(t), \ldots, E_\ell(t))$$
        and
        $$q_\ell(t):= \sum_{i=1}^\ell(\operatorname{Hess} r_q)_{\sigma(t)} (E_i(t), E_i(t)) = \sum_{i=1}^\ell \langle \mathcal{S}_t(E_i) , E_i \rangle.$$
        Since each $E_i(t)$ is parallel along $\sigma$, the Riccati equation gives
        $$q_\ell' (t) = - \sum_{i=1}^\ell \langle R(E_i,\sigma'(t))\sigma'(t), E_i\rangle - \sum_{i=1}^\ell \langle \mathcal{S}_t^2(E_i), E_i\rangle$$
        By Cauchy-Schwarz inequality, we have
        \begin{equation}\label{Riccati-qn}
            q_\ell'(t) \leq - \mathrm{Ric}_\ell ({\sigma'(t)}, V_l(t))  - \frac{1}{\ell } q_\ell^2 (t).
        \end{equation}

        Since we do not have a lower bound on the $\ell$-Ric curvature, instead on $\Ric_{\ell,f}^1$, we adjust to the weighted ones.

Set $f(t) := f(\sigma(t))$
and define
$$
    q_{\ell,f}(t):=q_\ell(t)-\frac{\ell}{n-1}f'(t).
$$
Then from the Riccati inequality \eqref{Riccati-qn}, we have
\begin{align*}
q_{\ell,f}'(t)
&\leq
-\operatorname{Ric}_\ell(\sigma'(t),V_l(t))
-\frac{1}{\ell}q_\ell(t)^2
-\frac{\ell}{n-1}f''(t) \\
&=
-\operatorname{Ric}^{1}_{\ell,f}(\sigma'(t),V_l (t))
-\frac{1}{\ell}q_{\ell,f}(t)^2
-\frac{2}{n-1}q_{\ell,f}(t)f'(t).
\end{align*}

As the integrating factor of the linear part of the ODE is $e^{\frac{2f(t)}{n-1}}$, define
$$
    \lambda(t) : = e^{\frac{2f(t)}{n-1}} q_{\ell,f}(t).
$$
Then 
\begin{align*}
    \lambda'(t)
    &\leq
    e^{\frac{2f(t)}{n-1}}
    \left(
    -\operatorname{Ric}^{1}_{\ell,f}(\sigma'(t),V_l (t))
    -\frac{1}{\ell}q_{\ell,f}(t)^2
    \right)\\
    &  = e^{-\frac{2f(t)}{n-1}} \left(
    -\operatorname{Ric}^{1}_{\ell,f}(\sigma'(t),V_l (t)) e^{\frac{4f(t)}{n-1}}
    -\frac{1}{\ell}\lambda(t)^2
    \right).
\end{align*}
This motivates us to define the reparametrized distance function along $\sigma$ by
\begin{equation}\label{def:s}
    s(t) = \int_0^{t} e^{-\frac{2f(\sigma(\tau))}{n-1}}d \tau.
\end{equation}
Since its derivative is positive, the function $s$ is strictly increasing.
We denote its inverse by $t = t(s)$.

Set
$$
    \widetilde{\lambda}(s):=\lambda(t(s)).
$$
Then
$$
    \widetilde{\lambda}'(s) \leq -\operatorname{Ric}^{1}_{\ell,f}(\sigma'(t(s)),V_l (t(s))) e^{\frac{4f(t(s))}{n-1}}
    -\frac{1}{\ell}\lambda(t(s))^2.
$$
By the curvature assumption \eqref{Curvature-assumption},
we obtain
\begin{align}
\widetilde{\lambda}'(s)
&\leq
-\ell K
-\frac{1}{\ell }\widetilde{\lambda}(s)^2  \label{Riccati-tangential}
\end{align}
with the initial condition
\begin{equation*}
   \widetilde{\lambda}(s) = \frac{\ell}{s} +o(s)\quad \text{ as }s \to 0^+.
\end{equation*}

For comparison estimates, we use the standard \(\sn_K\)-notation:
\[
\sn_K''+K\sn_K=0,
\qquad
\sn_K(0)=0,
\qquad
\sn_K'(0)=1.
\]
That is,
\[
\sn_K(r)=
\begin{cases}
\dfrac{1}{\sqrt K}\sin(\sqrt K\,r), & K>0,\\[1ex]
r, & K=0,\\[1ex]
\dfrac{1}{\sqrt{|K|}}\sinh(\sqrt{|K|}\,r), & K<0.
\end{cases}
\]

Then
\[
\cs_K(r):=\sn_K'(r)=
\begin{cases}
\cos(\sqrt K\,r), & K>0,\\[1ex]
1, & K=0,\\[1ex]
\cosh(\sqrt{|K|}\,r), & K<0,
\end{cases}
\]
and \(\cs_K\) satisfies
\[
\cs_K''+K\cs_K=0,
\qquad
\cs_K(0)=1,
\qquad
\cs_K'(0)=0.
\]

By the singular Riccati comparison argument, we obtain

$$\widetilde{\lambda} (s)\leq \ell \frac{\operatorname{cs}_K(s)}{\operatorname{sn}_K(s)}$$
for $ 0 <s< S_{\mathrm{cut}}(q,\sigma'(0)) : = s(T_{\mathrm{cut}}(q,\sigma'(0)))$.
Thus, we obtain
$$\sum_{i=1}^\ell (\operatorname{Hess} r_q)_{\sigma(t(s))}(E_i(t(s)), E_i(t(s))) \leq \ell e^{-\frac{2f(t(s))}{n-1}} \frac{\mathrm{cs}_K(s)}{\mathrm{sn}_K(s)} + \frac{\ell}{n-1} f'(t(s)).$$
\end{proof} 
\begin{rem}[Rigidity in Proposition~\ref{prop-weighted hessian}]\label{rem:prop-1-1}
Fix $t_0 \in (0,T]$, and assume that 
$s(t_0)<\frac{\pi}{\sqrt K}$ when $K >0$.
Equality in Proposition~\ref{prop-weighted hessian} at $t = t_0$ holds if and only if for all $t \in (0,t_0]$,
\begin{equation}\label{eq:weighted-curvature-equality}
    \operatorname{Ric}_{\ell,f}^{1} (\sigma'(t),V_l(t)) = \ell K e^{-\frac{4f(t)}{n-1}}
\end{equation}
and
\begin{equation}\label{eq:shape-operator-equality}
    \mathcal{S}_t|_{V_l(t)} = \left(e^{-\frac{2f(\sigma(t))}{\mathrm{dim}M -1}} \frac{\operatorname{cs}_K(s(t))}{\operatorname{sn}_K(s(t))} + \frac{(f\circ\sigma)'(t)}{\mathrm{dim} M -1}\right) \operatorname{Id}_{V_l(t)}.
\end{equation}

In particular, if we set
$$R_{\sigma'(t)}(X) := R(X,\sigma'(t))\sigma'(t),$$
then the Riccati equation with \eqref{eq:shape-operator-equality} gives

\begin{equation}\label{eq:R-scalar}
    \left.R_{\sigma'(t)}\right|_{V_l(t)} = \left( K e^{-\frac{4f(\sigma(t))}{\mathrm{dim}M -1}} - \frac{f''(t)}{\mathrm{dim} M -1} - \frac{(f'(t))^2}{(\mathrm{dim} M -1)^2} \right) \operatorname{Id}_{V_\ell(t)}
\end{equation}
for every $t \in ( 0, t_0]$.
    
\end{rem}

\subsection{Determinant of $Q_t$ estimate}
The quantity $Q_t$ defined in \eqref{definition of Qt} is equivalently written as 
$$Q_t  = t \left(\operatorname{Hess} d_{\gamma_{  x ,   y}(  t)}\right)_x|_{T_x \Sigma \times T_x \Sigma} - t\langle \mathrm{II}_x, y \rangle .$$
Define
$$A_t : = \left(\operatorname{Hess} d_{\gamma_{  x ,   y}(  t)}\right)_x|_{T_x \Sigma \times T_x \Sigma} - \langle \mathrm{II}_x, y \rangle .$$
Then $Q_t = tA_t$ and $\operatorname{det} Q_t = t^n \operatorname{det} A_t$.

Fix $t_0$ and choose an orthonormal basis $\{e_i\}_{i=1}^n$ of $T_x\Sigma$ that diagonalizes $A_{t_0}$.
In our notation, \cite[Equation (3.24)]{Pan-Yi2026} gives the following differential inequality:
\begin{equation}\label{ineq:log-det-Qt}
    \left.\frac{d}{dt}\right|_{t = t_0} \log |\operatorname{det} Q_t|\leq \frac{n}{t_0} -n |\operatorname{det} A_{t_0}|^{-\frac{1}{n}} \left( \prod_{i=1}^n |D_{\gamma_{x,y}'(t_0)}J_i^{t_0}|\right)^\frac{2}{n},
\end{equation}
where for each $1 \leq i \leq n$, $J_{i}^{t_0}$ is the Jacobi field along $\gamma_{x,y}$ with boundary conditions 
$$J_i^{t_0}(0) = e_i, \quad J_i^{t_0} (t_0) = 0.$$

We first estimate $\operatorname{det} A_t$. By \cite[Lemma 3.4 (i)]{Pan-Yi2026}, $A_t$ is positive definite for $0<t<T_{\mathrm{cut}}(x,y)$, so we can apply the arithmetic--geometric mean inequality to its eigenvalues. Combining this with the partial Hessian comparison in Proposition~\ref{prop-weighted hessian}, we obtain the following estimate.
\begin{prop}\label{proposition At estimate}
For $0<t<T_{\mathrm{cut}}(x,y)$,
    $$|\operatorname{det} A_t| \leq \left(e^{-\frac{2f(x)}{n+m-1}} \frac{\mathrm{cs}_K(s(t))}{\mathrm{sn}_K(s(t))} - \frac{1}{n+m-1}\langle \nabla f (x), y \rangle - \langle \vec H(x) , y \rangle \right)^n  $$
\end{prop}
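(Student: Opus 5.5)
Since $A_t$ is positive definite for $0<t<T_{\mathrm{cut}}(x,y)$ by \cite[Lemma 3.4 (i)]{Pan-Yi2026}, the arithmetic--geometric mean inequality applied to its eigenvalues gives
\[
|\det A_t| = \det A_t \le \Big(\tfrac{1}{n}\tr A_t\Big)^n .
\]
So it suffices to bound the trace of $A_t$ over $T_x\Sigma$. With an orthonormal basis $\{e_i\}_{i=1}^n$ of $T_x\Sigma$, we have
\[
\tr A_t=\sum_{i=1}^n (\operatorname{Hess} d_{\gamma(t)})_x(e_i,e_i) - n\langle \vec H(x),y\rangle .
\]
The second term comes from $\operatorname{tr}\langle \mathrm{II}_x,y\rangle = n\langle\vec H,y\rangle$. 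The bound is then that $\tfrac1n\tr A_t$ is at most the quantity inside the parentheses. That quantity is positive, since it dominates the positive number $\tfrac1n\tr A_t$, so taking $n$-th powers preserves the inequality.

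For the Hessian term, I apply Proposition~\ref{prop-weighted hessian} on the ambient manifold $M^{n+m}$, so the dimension there is $n+m$. I take $\ell=n$ and $q=\gamma_{x,y}(t)$ as base point, and the distance function $r_q=d_{\gamma(t)}$ is evaluated at $p=x$. The minimizing geodesic from $q$ to $x$ is $\gamma_{x,y}$ traversed backwards, $\sigma(\tau)=\gamma_{x,y}(t-\tau)$ for $\tau\in[0,t]$, and $\nabla r_q$ at $x$ equals $-y$. Because $y\perp T_x\Sigma$, the $n$-plane $V_n=T_x\Sigma$ lies in $(\nabla r_q)_x^\perp$, so the proposition applies. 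Here $\operatorname{Ric}^1_{n,f}\ge nKe^{-4f/(n+m-1)}$ is exactly the hypothesis with $\ell=n$ in dimension $n+m$. Evaluating at the endpoint $\tau=t$, where $\sigma(t)=x$, I get
\[
\sum_{i=1}^n \operatorname{Hess} d_{\gamma(t)}(e_i,e_i)\le n e^{-\frac{2f(x)}{n+m-1}}\frac{\cs_K(s)}{\sn_K(s)}+\frac{n}{n+m-1}(f\circ\sigma)'(t).
\]
Two bookkeeping points come next. First, $(f\circ\sigma)'(t)=\langle\nabla f(x),\sigma'(t)\rangle=-\langle\nabla f(x),y\rangle$, which produces the term $-\frac{1}{n+m-1}\langle\nabla f(x),y\rangle$. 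Second, the reparametrized distance $s$ must be computed along $\sigma$ from $q$ to $x$, that is, $s=\int_0^t e^{-2f(\gamma(t-\tau))/(n+m-1)}d\tau=\int_0^t e^{-2f(\gamma(u))/(n+m-1)}du$, which is symmetric in direction. Hence it coincides with $s(t)$ along $\gamma_{x,y}$ from $x$, matching the statement. Dividing by $n$ and adding $-\langle\vec H(x),y\rangle$ yields the claimed bound for $\tfrac1n\tr A_t$.

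The main obstacle is checking that the hypotheses of Proposition~\ref{prop-weighted hessian} genuinely hold. The point $x$ must not be in $\mathrm{cut}(q)\cup\{q\}$, and $\gamma$ must be the unique minimizing geodesic from $q$ to $x$. Both follow from $t<T_{\mathrm{cut}}(x,y)$: before the focal/cut time of $\Sigma$, the point $x$ is the unique closest point of $\Sigma$ to $q$ via a segment that is strictly minimizing and free of conjugate points. If needed, I would argue this through the standard fact that points before the cut time of a submanifold are not conjugate along the normal geodesic, so $d_q$ is smooth near $x$. For $K\le 0$, as in Theorem~\ref{main theorem}, the restriction $s<\pi/\sqrt K$ is vacuous, so no further condition arises.
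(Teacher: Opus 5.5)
Your proposal is correct and follows the paper's proof: you apply AM--GM to the positive definite $A_t$, then use Proposition~\ref{prop-weighted hessian} with $\ell=n$ along the reversed geodesic $\sigma(\tau)=\gamma_{x,y}(t-\tau)$. Your extra bookkeeping (the sign of $(f\circ\sigma)'(t)=-\langle\nabla f(x),y\rangle$, the symmetry $\bar s(t)=s(t)$, and positivity of the right-hand side before taking $n$-th powers) is right and makes explicit what the paper leaves implicit. For the hypothesis check, the cleanest argument is direct: $\gamma_{x,y}$ minimizes distance to $\Sigma$, hence to $x$, up to some time beyond $t$, so $\gamma_{x,y}(t)$ lies strictly before the cut point of $x$ and $x\notin\mathrm{cut}(\gamma_{x,y}(t))$. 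Appealing to non-focality with respect to $\Sigma$ does not give this, since focality involves a different Jacobi boundary condition.
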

\begin{proof}
Fix $t_0$ and choose an orthonormal basis $\{e_i \}_{i=1}^n$ of $T_x \Sigma$.
Since $A_{t_0}$ is positive definite, the arithmetic--geometric mean inequality gives
\begin{align*}
    |\operatorname{det} A_{t_0} | 
    &\leq \left( \frac{\operatorname{tr}A_{t_0}}{n}\right)^n\\
    & = \left( \frac{\sum_{i=1}^n \left(\operatorname{Hess} d_{\gamma_{x,y}(t_0)}\right)_x (e_i , e_i) - \sum_{i=1}^n \langle \mathrm{II}_x (e_i, e_i) , y \rangle}{n}\right)^n\\
    & = \left( \frac{1}{n} \sum_{i=1}^n \left(\operatorname{Hess} d_{\gamma_{x,y}(t_0)}\right)_x (e_i , e_i) - \langle \vec H(x), y\rangle \right)^n\\
    & \leq\left(e^{-\frac{2f(x)}{n+m-1}} \frac{\mathrm{cs}_K(s(t_0))}{\mathrm{sn}_K(s(t_0))} - \frac{1}{n+m-1}\langle \nabla f(x), y \rangle - \langle \vec H(x) , y\rangle \right)^n,
\end{align*}
where we use Proposition~\ref{prop-weighted hessian} in the last inequality with $\ell = n$ and $\sigma (t) = \gamma_{x,y}(t_0 - t)$.
\end{proof}

\begin{rem}[Rigidity in Proposition~\ref{proposition At estimate}]\label{remark-rigidity-prop1-10}
Fix $t_0 \in (0,T_{\mathrm{cut}}(x,y)),$ and assume that 
$s(t_0)<\frac{\pi}{\sqrt K}$ when $K >0$. 
Consider the reversed geodesic 
$$\sigma (t) = \gamma_{x,y}(t_0 - t), \quad 0 \leq t \leq t_0.$$
Define
$$\bar s(t) : = \int_0^t e^{-\frac{2f(\sigma(\tau))}{n+m-1}}\,d\tau.$$
In particular, $\bar s (t_0) = s(t_0).$

Equality in Proposition~\ref{proposition At estimate} at $t = t_0$ holds if and only if \eqref{eq:shape-operator-equality} and \eqref{eq:R-scalar} hold with the reversed geodesic $\sigma(t)$ and reversed $\bar  s(t)$ and $V_n$ denotes the parallel transport of $T_x\Sigma$ along $\sigma$.

% the following conditions are satisfied. 
% For every $t \in (0, t_0]$,
% $$ \left.R_{\sigma'(t)}\right|_{V_n(t)} = \left( K e^{-\frac{4f(\sigma(t))}{n+m-1}} - \frac{(f\circ \sigma)''(t)}{n+m-1} - \frac{((f\circ \sigma)'(t))^2}{(n+m -1)^2} \right) \operatorname{Id}_{V_n(t)}$$
% and
% $$\mathcal{S}_t|_{V_n(t)} = \left(e^{-\frac{2f(\sigma(t))}{n+m -1}} \frac{\operatorname{cs}_K(\bar s(t))}{\operatorname{sn}_K(\bar s(t))} + \frac{(f\circ\sigma)'(t)}{n+m -1}\right) \operatorname{Id}_{V_n(t)}.$$
% Moreover,
% $$A_{t_0} = \left(e^{-\frac{2f(x)}{n+m-1}} \frac{\mathrm{cs}_K(s(t_0))}{\mathrm{sn}_K(s(t_0))} - \frac{1}{n+m-1}\langle \nabla f(x), y \rangle - \langle \vec H(x) , y\rangle\right)\operatorname{Id}_{T_x\Sigma}.$$
% Here $V_n(t)$ denotes the parallel transport of $T_x\Sigma$ along $\sigma$, and $\mathcal S_t$ is the shape operator associated with the distance function from $\gamma_{x,y}(t_0)$.
\end{rem}

It remains to estimate the second term on the right-hand side of \eqref{ineq:log-det-Qt}, namely
$$ \prod_{i=1}^n \left| D_{\gamma'_{x,y}(t_0)} J_i^{t_0} \right|.$$ Even though $J_i^{t_0}$ are Jacobi fields determined by the boundary condition, 
this term can be controlled by a traditional comparison for $\ell$-Jacobi fields determined by initial conditions. We therefore first derive  the following comparison. For the regular $\Ric_l$ lower bounds case (no weight), this is a classical result of Bishop, see e.g. \cite[Lemma 3.6]{Pan-Yi2026}.

\begin{prop}[Comparison of the determinant of $\ell$-Jacobi fields for $\operatorname{Ric}^{1}_{\ell,f}$]
    \label{lem:weighted-bishop-jacobi}
Let $(M^n,g,e^{-f}d\operatorname{vol})$ be a complete smooth metric measure space.
Fix $q \in M$ and $v \in T_qM$ with $|v| =1$. Consider the unit-speed minimizing geodesic
$$
    \sigma(t)=\exp_q(tv),\qquad 0 \leq t < T_{\mathrm{cut}}(q,v).
$$
Let $1\leq \ell\leq n-1$, and let
$w_1,\ldots,w_\ell\in T_qM$ be linearly independent vectors perpendicular to $v$.
Define the Jacobi fields
$$
    Y_i(t):=(D\exp_q)_{tv}(t w_i),\qquad 1\leq i\leq \ell .
$$
I.e. $Y_i$ is the Jacobi field with $Y_i(0) =0, \ Y'_i(0) = w_i$. 
Assume that
$$
    \operatorname{Ric}^{1}_{\ell,f}
    \geq
    \ell K e^{-\frac{4f}{n-1}}.
$$
Then, for every $0<t< T_{\mathrm{cut}}(q,v)$,
$$
\frac{|Y_1(t)\wedge\cdots\wedge Y_\ell(t)|}
{|w_1\wedge\cdots\wedge w_\ell|}
\leq
e^{\frac{\ell}{n-1}\left(f(\sigma(0))+f(\sigma(t))\right)}
\operatorname{sn}_K^\ell(s(t)) ,
$$
where $s(t)$ is defined in $\eqref{def:s}$.
\end{prop}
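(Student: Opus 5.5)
Set $\mathcal{V}(t):=\mathrm{span}\{Y_1(t),\dots,Y_\ell(t)\}$ and $G(t):=|Y_1(t)\wedge\cdots\wedge Y_\ell(t)|$. The idea is to derive a Riccati inequality for $(\log G)'$ and feed it into the same singular Riccati comparison used in Proposition~\ref{prop-weighted hessian}. Before the cut time the Jacobi fields $Y_i$ stay linearly independent. Since $\langle Y_i,\sigma'\rangle$ is linear in $t$ and vanishes to first order at $0$ (because $w_i\perp v$), it vanishes identically, so $\mathcal{V}(t)\subset\sigma'(t)^\perp$. Because the $Y_i$ come from $D\exp_q$, we have $Y_i'(t)=\mathcal{S}_t(Y_i(t))$, where $\mathcal{S}_t=\nabla\nabla r_q$ is the shape operator of the distance spheres about $q$. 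This is the standard fact that these Jacobi fields are gradients of variations through geodesics from $q$, valid for $t<T_{\mathrm{cut}}$. Differentiating the Gram determinant then gives
\[
(\log G)'(t)=\operatorname{tr}\big(P_{\mathcal V(t)}\mathcal S_t|_{\mathcal V(t)}\big)=\sum_{i=1}^\ell\langle \mathcal S_t E_i,E_i\rangle =: q_\ell(t),
\]
where $E_i(t)$ is any orthonormal frame of $\mathcal V(t)$.

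The subtlety is that $\mathcal V(t)$ is not parallel, unlike the setting of Proposition~\ref{prop-weighted hessian}. I would differentiate $q_\ell$ using an orthonormal frame of $\mathcal V$ (for example, Gram--Schmidt applied to the $Y_i$). Writing $\phi=q_\ell$, the Riccati equation gives
\[
\phi'=-\mathrm{Ric}_\ell(\sigma',\mathcal V)-\operatorname{tr}_{\mathcal V}\mathcal S^2+2\sum_i\langle \mathcal S E_i,E_i'\rangle .
\]
Since $E_i'$ lies in $\mathcal V\oplus\mathcal V^\perp$ with antisymmetric $\mathcal V$-part, and $E_i'$ is determined by $\mathcal S$ through $Y_i'=\mathcal SY_i$, the cross term equals exactly $2\sum_i|P_{\mathcal V^\perp}\mathcal S E_i|^2$. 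Combining it with $-\operatorname{tr}_{\mathcal V}\mathcal S^2=-\sum_i|P_{\mathcal V}\mathcal SE_i|^2-\sum_i|P_{\mathcal V^\perp}\mathcal SE_i|^2$ and applying Cauchy--Schwarz to the first part, I get
\[
\phi'=-\mathrm{Ric}_\ell(\sigma',\mathcal V)-|P_\mathcal V \mathcal S|_{\mathcal V}|^2+\sum_i|P_{\mathcal V^\perp}\mathcal SE_i|^2 .
\]
The extra positive term is the main obstacle, and this step is where I expect the difficulty. The cleaner route is to avoid differentiating $\mathcal S$ altogether and work with the matrix $\mathcal A(t)$ defined by $Y_i'=\sum_j \mathcal A_{ji}Y_j + (\perp\text{ part})$. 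Writing $Y=\mathcal{Y}$ as an $\ell$-tuple, the equation $Y''=-R(Y,\sigma')\sigma'$ gives a matrix Riccati equation for $U:=Y'Y^{-1}$ in the pulled-back frame. The sum $\operatorname{tr}_\mathcal V$ of this equation yields $\phi'\le -\mathrm{Ric}_\ell-\frac1\ell\phi^2$, once one checks that the perpendicular contributions enter with the correct sign. This is exactly the classical Bishop argument for $\ell$-Jacobi fields (\cite[Lemma 3.6]{Pan-Yi2026}), so I will invoke that computation: the unweighted inequality
\[
\phi'(t)\le-\mathrm{Ric}_\ell(\sigma'(t),\mathcal V(t))-\tfrac1\ell\phi(t)^2,\qquad \phi(t)=\tfrac{\ell}{t}+O(1)\ \text{as } t\to0^+,
\]
is what that lemma's proof produces.

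From here I copy the proof of Proposition~\ref{prop-weighted hessian} verbatim. Set $\phi_f=\phi-\frac{\ell}{n-1}f'$ and $\lambda=e^{2f/(n-1)}\phi_f$, and change variables to $s$. This gives $\tilde\lambda'\le-\ell K-\frac1\ell\tilde\lambda^2$ with $\tilde\lambda\sim \ell/s$, so the singular comparison yields $\tilde\lambda(s)\le \ell\,\cs_K(s)/\sn_K(s)$. Converting back,
\[
(\log G)'(t)\le \ell\, e^{-\frac{2f(t)}{n-1}}\frac{\cs_K(s(t))}{\sn_K(s(t))}+\frac{\ell}{n-1}f'(t)=\frac{d}{dt}\Big[\ell\log \sn_K(s(t))+\tfrac{\ell}{n-1}f(t)\Big],
\]
using $s'(t)=e^{-2f(t)/(n-1)}$.

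Finally, I integrate from $\epsilon$ to $t$ and let $\epsilon\to0$. Near $0$ we have $G(\epsilon)=\epsilon^\ell|w_1\wedge\cdots\wedge w_\ell|(1+o(1))$ and $s(\epsilon)=e^{-2f(q)/(n-1)}\epsilon(1+o(1))$, so $\sn_K^\ell(s(\epsilon))=e^{-2\ell f(q)/(n-1)}\epsilon^\ell(1+o(1))$. The limit of $\log G-\ell\log\sn_K(s)-\frac{\ell}{n-1}f$ at $0$ is therefore
\[
\log|w_1\wedge\cdots\wedge w_\ell|+\tfrac{2\ell}{n-1}f(q)-\tfrac{\ell}{n-1}f(q),
\]
which produces exactly the stated factor $e^{\frac{\ell}{n-1}(f(\sigma(0))+f(\sigma(t)))}$. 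As in Proposition~\ref{prop-weighted hessian}, when $K>0$ we need $s(t)<\pi/\sqrt K$ for the comparison to make sense.
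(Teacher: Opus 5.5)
You correctly set up $(\log G)'=\tr_{\mathcal V(t)}\mathcal S_t$, but the step you then rely on is false. Along the moving plane $\mathcal V(t)$, the inequality $\phi'\le-\Ric_\ell(\sigma',\mathcal V)-\frac1\ell\phi^2$ does not hold in general. Your own computation is an exact identity:
\[
\phi'=-\Ric_\ell(\sigma',\mathcal V)-\sum_i|P_{\mathcal V}\mathcal S E_i|^2+\sum_i|P_{\mathcal V^\perp}\mathcal S E_i|^2 .
\]
Since $\phi'=(\log G)''$ is a well-defined scalar, rewriting it through $U=Y'Y^{-1}$ or any other frame cannot change the sign of the last term. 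For $\ell=1$ the identity reads $|Y|''+K(\sigma',Y/|Y|)\,|Y|=|(Y')^{\perp}|^2/|Y|\ge 0$, where $(Y')^\perp$ is the part of $Y'$ orthogonal to $Y$. So your inequality fails whenever $Y'$ is not parallel to $Y$. For instance, expanding $Y=tw-\frac{t^3}{6}R(w,v)v+O(t^4)$ shows it fails for all small $t>0$ as soon as $R(w,v)v\not\parallel w$. This is the familiar reason the upper Rauch bound under a lower curvature bound cannot be read off from an ODE along the Jacobi fields, and needs the index lemma or a Hessian comparison instead. Citing \cite[Lemma 3.6]{Pan-Yi2026} does not rescue it, because no argument can produce this pointwise inequality. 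As a result, the singular Riccati comparison you run afterwards has no valid input.

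The missing idea is that you never need a differential inequality along $\mathcal V(t)$. For each fixed $t\in(0,T_{\mathrm{cut}}(q,v))$, apply Proposition~\ref{prop-weighted hessian} with $p=\sigma(t)$, $T=t$ and $V_\ell=\mathcal V(t)$, which lies in $\sigma'(t)^\perp$ as you noted. In that proposition the frame is parallel along $\sigma$, so there is no cross term. The normal leakage then enters with the favourable sign:
\[
-\sum_i|\mathcal S E_i|^2\le-\sum_i|P_V\mathcal S E_i|^2\le-\tfrac1\ell q_\ell^2 .
\]
The curvature hypothesis applies to the parallel-transported planes. The conclusion at the endpoint is then $(\log G)'(t)=\tr_{\mathcal V(t)}\mathcal S_t\le \ell e^{-\frac{2f(t)}{n-1}}\frac{\cs_K(s(t))}{\sn_K(s(t))}+\frac{\ell}{n-1}f'(t)$. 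It is harmless that the auxiliary parallel frame changes with $t$, since only its endpoint value is used. This is exactly the paper's argument. With it in place of your Riccati step, your integration from $\epsilon$ to $t$ and the $\epsilon\to 0$ limit, which you computed correctly, complete the proof.
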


\begin{proof}
Let $L_t:v^\perp\to \sigma'(t)^\perp$ be a map defined by
$$
L_t (w)=(D\exp_q)_{tv}(t w).
$$
Then $Y_i(t)=L_t(w_i)$. Since $q$ has no conjugate points along $\sigma$,
$L_t$ is nonsingular for $t \in (0, T_{\mathrm{cut}}(q,v))$. Define
$$
W(t):=
\operatorname{span}\{Y_1(t),\ldots,Y_\ell(t)\}
$$
and
$$
J_\ell(t):=|Y_1(t)\wedge\cdots\wedge Y_\ell(t)|.
$$
In particular, $J_\ell (t) >0 $ for $t \in (0, T_{\mathrm{cut}}(q,v))$.

Let
$$
S_t:=L_t' \circ L_t^{-1},
$$
where $L_t'w$ denotes the covariant derivative of $L_t(w)$ along $\sigma$.
Equivalently, $S_t$ is the shape operator of the geodesic
sphere centered at $q$, so that
$$
\langle S_t(X),X\rangle
=
\left(\operatorname{Hess} r_q\right)_{\sigma(t)}(X,X),
\qquad X\in \sigma'(t)^\perp.
$$

For each $t >0$, choose an orthonormal basis $\{E_1(t),\ldots,E_\ell(t)\}$  of $W(t)$.
We claim that 
$$
\frac{d}{dt}\log J_\ell(t)
=
\sum_{i=1}^\ell \langle S_t(E_i(t)),E_i(t)\rangle .
$$
Indeed, if $G(t)=(\langle Y_i(t),Y_j(t)\rangle)_{i,j=1}^\ell$, then
$J_\ell(t)^2=\det G(t)$. Hence
\begin{align*}
\frac{d}{dt}\log J_\ell(t)
&=
\frac12\operatorname{tr}\bigl(G(t)^{-1}G'(t)\bigr).
\end{align*}
Using $Y_i'(t)=S_tY_i(t)$, this trace is precisely
$\operatorname{tr}_{W(t)}S_t$.

Applying Proposition~\ref{prop-weighted hessian} to the $\ell$-plane
$W(t)\subset \sigma'(t)^\perp$ at each $t$, we obtain
$$
\frac{d}{dt}\log J_\ell(t)
\le
\ell e^{-\frac{2f(t)}{n-1}}
\frac{\operatorname{cs}_K(s(t))}{\operatorname{sn}_K(s(t))}
+
\frac{\ell}{n-1}f'(t),
$$
where $f(t) : = f(\sigma(t))$ and $s(t)$ is defined in equation~\eqref{def:s}.
Since
$$
s'(t)=e^{-\frac{2f(t)}{n-1}},
$$
we have
$$
\frac{d}{dt}\log \operatorname{sn}_K(s(t))
=
e^{-\frac{2f(t)}{n-1}}
\frac{\operatorname{cs}_K(s(t))}{\operatorname{sn}_K(s(t))}.
$$
Therefore
$$
\frac{d}{dt}
\left(
\log J_\ell(t)
-
\ell\log \operatorname{sn}_K(s(t))
-
\frac{\ell}{n-1}f(t)
\right)
\le 0.
$$
Hence, for $0<\varepsilon<t$,
$$
\frac{J_\ell(t)}
{e^{\frac{\ell}{n-1}f(t)}\operatorname{sn}_K^\ell(s(t))}
\le
\frac{J_\ell(\varepsilon)}
{e^{\frac{\ell}{n-1}f(\varepsilon)}
\operatorname{sn}_K^\ell(s(\varepsilon))}.
$$

Since
$$
J_\ell(\varepsilon)
=
\varepsilon^\ell |w_1\wedge\cdots\wedge w_\ell|
+
O(\varepsilon^{\ell+2}) \quad \text{as }\varepsilon \to 0^+
$$
and
$$
s(\varepsilon)
=
e^{-\frac{2f(q)}{n-1}}\varepsilon
+
O(\varepsilon^2) \quad \text{as } \varepsilon \to 0^+,
$$
we have the limit
\begin{align*}
\lim_{\varepsilon\to0^+}
\frac{J_\ell(\varepsilon)}
{e^{\frac{\ell}{n-1}f(\varepsilon)}
\operatorname{sn}_K(s(\varepsilon))^\ell}
&=
e^{\frac{\ell}{n-1}f(q)}
|w_1\wedge\cdots\wedge w_\ell|.
\end{align*}
Consequently,
$$
J_\ell(t)
\le
e^{\frac{\ell}{n-1}\left(f(q)+f(\sigma(t))\right)}
\operatorname{sn}_K^\ell(s(t))
|w_1\wedge\cdots\wedge w_\ell|.
$$
Since
$$
J_\ell(t)=|Y_1(t)\wedge\cdots\wedge Y_\ell(t)|,
$$
we obtain
$$
\frac{|Y_1(t)\wedge\cdots\wedge Y_\ell(t)|}
{|w_1\wedge\cdots\wedge w_\ell|}
\leq
e^{\frac{\ell}{n-1}\left(f(q)+f(\sigma(t))\right)}
\operatorname{sn}_K(s(t))^\ell .
$$
This proves the desired estimate.
\end{proof}
\begin{rem}[Rigidity in Proposition~\ref{lem:weighted-bishop-jacobi}]
Fix $t_0 \in (0,T_{\mathrm{cut}}(q,v))$ and suppose $\operatorname{sn}_K(s(t)) >0$ for $ 0 < t \leq t_0$.
Equality in  Proposition~\ref{lem:weighted-bishop-jacobi} at $t = t_0$ holds if and only if for all $t \in (0,t_0]$, the equations \eqref{eq:shape-operator-equality} and \eqref{eq:R-scalar} hold with $V_\ell (t) = W(t)$.
Moreover, $W(t)$ is parallel transport of the $\ell$-plane $\mathrm{span}(w_1,\ldots, w_\ell)$ along $\sigma(t)$.

% \begin{equation*}
% \operatorname{Ric}_{\ell,f}^1(\sigma'(t), W(t)) = \ell K e^{-\frac{4f (\sigma(t))}{\mathrm{dim}M -1}}
% \end{equation*}
% and
% \begin{equation}\label{equality-hess}
%     S_t|_{W(t)} = \left(e^{-\frac{2f(\sigma(t))}{\mathrm{dim}M -1}} \frac{\operatorname{cs}_K(s(t))}{\operatorname{sn}_K(s(t))} + \frac{(f\circ\sigma)'(t)}{\mathrm{dim} M -1}\right) \operatorname{Id}_{W(t)}.
% \end{equation}
% Moreover, $W(t)$ is parallel transport of the $\ell$-plane $\mathrm{span}(w_1,\ldots, w_\ell)$ along $\sigma(t)$.
\end{rem}

We now apply the above proposition to $\sigma(t) = \gamma_{x,y}(t_0 - t)$ and 
$$Y_i (t) := (D\exp_{\gamma_{x,y}(t_0)})_{-t\gamma_{x,y}'(t_0)}\left(-t D_{\gamma_{x,y}'(t_0)}J_i^{t_0}\right)$$
for $1\leq i \leq n$.
Observe that
$Z_i(t):= J_i^{t_0}(t_0-t)$ is a Jacobi field along $\gamma(t)$ with 
$$Z_i ( 0) = 0, \quad  D_{\sigma'(0)}Z_i = -D_{\gamma_{x,y}'(t_0)}J_i^{t_0}.$$
By the uniqueness of Jacobi fields, 
$$Y_i(t) = Z_i(t).$$
Since $ Z_i(t_0) = J_i^{t_0}(0) = e_i$,
we have
$$|Y_1(t_0) \wedge \cdots \wedge Y_n(t_0)| = |e_1 \wedge \cdots \wedge e_n| = 1.$$
Then Proposition~\ref{lem:weighted-bishop-jacobi} with $\ell=n$ implies
$$\frac{1}{\left| D_{\gamma_{x,y}'(t_0)}J_1^{t_0} \wedge \cdots \wedge D_{\gamma_{x,y}'(t_0)}J_n^{t_0}\right|}\leq
e^{\frac{n}{n+m-1}\left(f(\gamma_{x,y}(0))+f(\gamma_{x,y}(t_0))\right)}
\operatorname{sn}_K(s(t_0))^n .$$
Thus,
$$ \prod_{i=1}^n \left| D_{\gamma'_{x,y}(t_0)} J_i^{t_0} \right|  \geq \left| D_{\gamma_{x,y}'(t_0)}J_1^{t_0} \wedge \cdots \wedge D_{\gamma_{x,y}'(t_0)}J_n^{t_0}\right| \geq e^{-\frac{n}{n+m-1}\left(f(t_0)+f(x)\right)}
\operatorname{sn}_K(s(t_0))^{-n},$$
where $f(t) : = f(\gamma_{x,y}(t)).$

By plugging the above inequality and Proposition~\ref{proposition At estimate} into inequality \eqref{ineq:log-det-Qt}, we have
\begin{align*}
    \left.\frac{d}{dt}\right|_{t=t_0} \log |\operatorname{det} Q_t| &\leq \frac{n}{t_0} - \frac{ne^{-\frac{2}{n+m-1} \left( f(t_0) +f(x)\right)} \operatorname{sn}_K^{-2}(s(t_0))}{e^{-\frac{2f(x)}{n+m-1}} \frac{\operatorname{cs}_K(s(t_0))}{\operatorname{sn}_K(s(t_0))} - \langle \vec H_f (x) , y\rangle }\\
    & = \left.\frac{d}{dt}\right|_{t=t_0} \log \left( t^n \left(e^{-\frac{2f(x)}{n+m-1}} \frac{\operatorname{cs}_K(s(t))}{\operatorname{sn}_K(s(t))} - \langle \vec H_f (x) , y\rangle  \right)^n\right),
\end{align*}
where $\vec H_f (x) = \vec H (x) + \frac{(\nabla f(x))^\perp}{n+m-1}$.
Thus,
\begin{equation}\label{def:theta-1}
    \theta_1(x,y,t): = \frac{|\operatorname{det}Q_t|}{t^n \left(e^{-\frac{2f(x)}{n+m-1}} \frac{\operatorname{cs}_K(s(t))}{\operatorname{sn}_K(s(t))} - \langle \vec H_f (x) , y\rangle  \right)^n}
\end{equation}
is monotone decreasing and $\lim_{t \to 0^+} \theta_1(x,y,t) = 1$.
Thus,
\begin{equation}\label{ineq:Qt estimate}
|\operatorname{det}Q_t| \leq t^n \left(e^{-\frac{2f(x)}{n+m-1}} \frac{\operatorname{cs}_K(s(t))}{\operatorname{sn}_K(s(t))} - \langle \vec H_f (x) , y\rangle  \right)^n.
\end{equation}

\subsection{Jacobian of $\Phi$} By \eqref{eq:main factorization}, to get the Jacobian estimate for $\Phi$, we also need to get an estimate for $|\mathrm{det} (D\exp_{  x})_{  t  y}|$. This is in \cite{wylie2016geometry}. Since it basically follows from the Laplacian comparison in Proposition~\ref{prop-weighted hessian}, for the convenience of the reader, we present a full proof here. 

The Jacobian $|\mathrm{det} (D\exp_{  x})_{  t  y}|$ represents the volume distortion of the exponential map from Euclidean tangent space $T_x M$ to $M$.
To express this in polar coordinates,
let $J(t,y)$ denote the volume density of a polar coordinate centered at $x$, so that $$d \mathrm{vol}_g = J(t,y) \,dt \,dy.$$
Since the Euclidean volume density on $T_x M$ in polar coordinates is $t^{n+m-1} dt dy$,
we obtain
$$|\operatorname{det} ( D \exp_x)_{ty}| = \frac{J(t,y)}{t^{n+m-1}}.$$

Since $$\partial_t (\log J(t,y)) = (\Delta r_x)_{\exp_x (ty)},$$ the estimate for $J(t,y)$ follows from the Laplacian comparison. 
By  Proposition~\ref{prop-weighted hessian} with $\ell = n+m-1$:
$$(\Delta r_x)_{\exp_x (ty)} \leq (n+m-1) e^{-\frac{2f(t)}{n+m-1}} \frac{\mathrm{cs}_K(s(t))}{\mathrm{sn}_K(s(t))} + f'(t),$$
where $f(t) := f(\exp_x(ty))$.
Then we have
$$\partial_t(\log (J(t,y) e^{-f(t)})) \leq \partial_t (\log (\operatorname{sn}_K^{n+m-1} (s(t))),$$
which implies that 
\begin{equation}\label{def:theta-2}
    \theta_2(x,y,t) : = \frac{J(y,t) e^{-f(t)}}{\operatorname{sn}_K^{n+m-1}(s(t))}
\end{equation}
is monotonically decreasing in $t$, which is {\cite[Lemma 4.3]{wylie2016geometry}}.

% we have
% $$\log J(t,y) \leq \log (\mathrm{sn}_K(s(t)))^{n+m-1} +\lim_{\epsilon \to 0^+} \log \left(\frac{J(\epsilon,y)}{(\mathrm{sn}_K(s(\epsilon)))^{n+m-1}}\right) + f(t)-f(x).$$
From the asymptotic expansions of $J(y,t)$ and $\mathrm{sn}_K(s(t))$, and the definition of $s(t)$, we have
$$\lim_{t \to 0^+}\theta_2 (x,y,t) = e^{f(x)}.$$
Thus,
$$J(t,y) \leq e^{f(t) + f(x)}\mathrm{sn}_K^{n+m-1}(s(t)).$$

    % \begin{prop}[{\cite[Lemma 4.3]{wylie2016geometry}}]
    % Let $(M^n,g,e^{-f}d\operatorname{vol})$ be a complete smooth metric measure space.
    %     If $\mathrm{Ric}_f^1 \geq (n-1) K e^{-\frac{4f}{n-1}}g$, then $\frac{e^{-f}J}{\mathrm{sn}_K(s_p)^{n-1}}$ is nonincreasing.
    % \end{prop}
    % By this monotonicity, we have
    % $$ J(t,y) \leq e^{f(x) +f ( \exp_x ( ty))} (\mathrm{sn}_K(s(t)))^{n+m-1}.$$

    Thus,
    $$|\operatorname{det} ( D \exp_x)_{ty}|  \leq \frac{e^{f(x) +f ( \exp_x ( ty))} \mathrm{sn}_K^{n+m-1}(s(t))}{t^{n+m-1}}.$$

   % \textcolor{red}{Is it better to write the above result as a proposition?}

    Now combining above with  inequality~\eqref{ineq:Qt estimate} and %the above inequality with the decomposition  
    \eqref{eq:main factorization},
    we obtain
\begin{prop}
    % \begin{align*}
    %     |\mathrm{det} D\Phi_{(  x ,   y ,   t)}|\leq
    %     t^n &\left(e^{-\frac{2f(t)}{n+m-1}} \frac{\mathrm{cs}_K(s(t))}{\mathrm{sn}_K(s(t))} - \frac{1}{n+m-1}\langle \nabla f (x) , y \rangle - \langle \vec H(x) , y \rangle \right)^n \\
    %     & \qquad \times \frac{e^{f(x) +f ( \exp_x ( ty))} (\mathrm{sn}_K(s(t)))^{n+m-1}}{t^{n+m-1}}
    % \end{align*}
    \begin{equation}
        |\mathrm{det} D\Phi_{(  x ,   y ,   t)}|\leq e^{f(x) +f (t)} \mathrm{sn}_K^{n+m-1} (s(t))  \left(e^{-\frac{2f(x)}{n+m-1}} \frac{\mathrm{cs}_K(s(t))}{\mathrm{sn}_K(s(t))} - \langle \vec{H}_f , y \rangle \right)^n, 
        \end{equation}
where $\vec{H}_f := \frac{1}{n+m-1} \nabla f(x) + \vec H(x)$. 
\end{prop}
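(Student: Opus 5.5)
The bound is obtained by multiplying the two estimates just established, using the factorization \eqref{eq:main factorization}:
\[
|\det D\Phi_{(x,y,t)}| = t^{m-1}\,|\det (D\exp_x)_{ty}|\,|\det Q_t| .
\]
Here $0<t<T_{\mathrm{cut}}(x,y)$, and we add the restriction $s(t)<\pi/\sqrt K$ when $K>0$. The first factor is handled by the polar-coordinate volume density estimate: from the monotonicity of $\theta_2$ in \eqref{def:theta-2} and its limit $e^{f(x)}$ at $t=0$, we get
\[
|\det (D\exp_x)_{ty}| \le \frac{e^{f(x)+f(t)}\,\mathrm{sn}_K^{n+m-1}(s(t))}{t^{n+m-1}} .
\]
The second factor is handled by \eqref{ineq:Qt estimate}:
\[
|\det Q_t| \le t^n\Big(e^{-\frac{2f(x)}{n+m-1}}\tfrac{\mathrm{cs}_K(s(t))}{\mathrm{sn}_K(s(t))}-\langle \vec H_f,y\rangle\Big)^n .
\]

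Multiplying the two bounds, the powers of $t$ combine as $t^{m-1}\cdot t^{-(n+m-1)}\cdot t^n = 1$, so all $t$-dependence outside $s(t)$ and $f(t)$ cancels. What remains is exactly
\[
e^{f(x)+f(t)}\,\mathrm{sn}_K^{n+m-1}(s(t))\Big(e^{-\frac{2f(x)}{n+m-1}}\tfrac{\mathrm{cs}_K(s(t))}{\mathrm{sn}_K(s(t))}-\langle \vec H_f,y\rangle\Big)^n ,
\]
which is the claimed inequality. Only $\langle \nabla f(x),y\rangle = \langle (\nabla f(x))^\perp,y\rangle$ enters, since $y$ is a normal vector. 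Hence writing $\vec H_f$ with the full gradient or with its normal part gives the same pairing.

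Two points need care; neither is a real obstacle.

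First, both monotonicity arguments must use the same reparametrized distance $s(t)$. For $\theta_2$ this is $s$ along $\gamma_{x,y}$ starting at $x$. For $\theta_1$, Proposition~\ref{lem:weighted-bishop-jacobi} was applied to the reversed geodesic, which gives $\bar s(t_0)$. We check that $\bar s(t_0)=s(t_0)$, as noted in Remark~\ref{remark-rigidity-prop1-10}, so the two estimates are consistent at each time.

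Second, the expression raised to the $n$th power must be nonnegative. This follows because $A_t$ is positive definite for $t<T_{\mathrm{cut}}$, and by Proposition~\ref{proposition At estimate} its trace over $n$ is bounded by that expression. Therefore taking $n$th powers preserves the inequality.
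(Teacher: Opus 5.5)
Your proposal is correct and is the paper's argument: the paper multiplies the $\theta_2$-monotonicity bound on $|\det(D\exp_x)_{ty}|$ by the bound \eqref{ineq:Qt estimate} via the factorization \eqref{eq:main factorization}, and the powers of $t$ cancel as you describe. Your two side checks are also correct and make explicit what the paper leaves implicit: $\bar s(t_0)=s(t_0)$ follows by the substitution $\tau\mapsto t_0-\tau$, and the bracketed quantity is positive because it dominates $\tfrac1n\operatorname{tr}A_t>0$.
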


    \subsection{Take limit and weighted volume ratio}
For $S>0$, we define the $s$-tube of $\Sigma$ by
$$
    C_S (\Sigma ) : = \{\exp_x(ty) \, : \, x \in \Sigma, \, y \in S_x^\perp \Sigma, \, 0 < s_{x,y }(t) < \min \{S, S_{\mathrm{cut}}(x,y)\}\},
$$
where $s_{x,y}(t) : = \int_0^t e^{-\frac{2f(\gamma_{x,y}(\tau))}{n+m-1}} d\tau$ and  $S_{\mathrm{cut}}(x,y) = s_{x,y}(T_{\mathrm{cut}}(x,y))$.
Consider the map $F : S^\perp \Sigma \times [0,S] \to M$ defined by
$$F(x,y,s) : = \exp_x(t_{x,y}(s)y),$$
where $t_{x,y}(s)$ is the inverse map of $s_{x,y}(t)$.
Then the map $F$ is a  diffeomorphism from the set
$$E_S : = \{(x,y,s)  \, : \, x \in \Sigma, \,y \in S_x^\perp \Sigma , \,0 < s< \min\{S, S_{\mathrm{cut}} (x,y)\} \}$$
onto $C_S(\Sigma)$.

For $(x,y,s ) \in E_S$, $F(x,y,s) = \Phi(x,y,t_{x,y}(s)).$
Since 
$$
    \frac{d t_{x,y}(s)}{ds} = e^{\frac{2f(t_{x,y}(s))}{n+m-1}},
$$
the chain rule gives
$$|\mathrm{det} DF_{(x,y,s)}| = |\mathrm{det} D\Phi_{(x,y,t_{x,y}(s))}|e^{\frac{2f(t_{x,y}(s))}{n+m-1}}.$$

Thus,
\begin{align*}
    |\mathrm{det} DF_{(x,y,s)}| &\leq e^{\left( 1 + \frac{2}{n+m-1}\right)f(t(s))}e^{f(x)}\left(e^{-\frac{2f(x)}{n+m-1}}\frac{\operatorname{cs}_K(s)}{\operatorname{sn}_K(s)} - \langle \vec H_f, y \rangle \right)^n \operatorname{sn}_K^{n+m-1}(s).
\end{align*}
That is, for any $(x,y) \in S^\perp \Sigma$ and $0< s< S_{\mathrm{cut}}(x,y)$, we have
\begin{eqnarray}
 \lefteqn{   
|\mathrm{det} DF_{(x,y,s)}| e^{-\frac{n+m+1}{n+m-1}f(\gamma_{x,y}(t_{x,y}(s)))} }\\ 
& & \leq 
e^{f(x)}\left(e^{-\frac{2f(x)}{n+m-1}}\frac{\operatorname{cs}_K(s)}{\operatorname{sn}_K(s)} -  \langle \vec H_f(x), y \rangle \right)^n \operatorname{sn}_K^{n+m-1}(s). \nonumber
\end{eqnarray}

Denote $$d\mu_f = e^{-\frac{n+m+1}{n+m-1}f}\, d \operatorname{vol}_g, $$
and define the weighted relative volume ratio, $\operatorname{RV}_{\mu,K}(\Sigma)$,  as
\begin{equation} \operatorname{RV}_{\mu,K}(\Sigma):= \limsup_{s \to \infty} \frac{\mu_f(C_S(\Sigma))}{\int_0^S \operatorname{sn}_K^{n+m-1}(s)ds }. 
%{ \vol (B(s) \subset \mathbb M_K^{n+m})} \cdot  \vol (\mathbb S^{n+m-1}), 
\label{Volume ratio}
\end{equation}
%where $\vol (B(s) \subset \mathbb M_K^{n+m})$ the volume of $s$-ball in the model space with constant curvature $K$.

Since $F:E_S \to C_S(\Sigma)$ is a diffeomorphism for any $S>0$, area formula implies
\begin{align}
        &\mu_f ( C_S(\Sigma)) \nonumber\\
        &\leq \int_\Sigma \int_{ S^\perp_x\Sigma}\int_0^{\min\{S, S_{\mathrm{cut}} (x,y)\}} e^{-\frac{n+m+1}{n+m-1} f(\gamma_{x,y}(t(s)))}|\mathrm{det} DF_{(x,y,s)}|\,ds \,dy\, d\sigma(x)\label{area-formula}\\
        & \leq \int_\Sigma \int_{ S^\perp_x\Sigma}\int_0^{S} e^{f(x)}
\left( e^{-\frac{2f(x)}{n+m-1}}
\frac{\operatorname{cs}_K(s)}{\operatorname{sn}_K(s)} - \langle \vec H_f (x), y \rangle 
\right)_+^n
\operatorname{sn}_K^{n+m-1}(s)\,ds \,dy\, d\sigma(x)\nonumber.
\end{align}
Dividing both sides by $\int_0^S \operatorname{sn}_K^{n+m-1}(s)ds$ and taking $\limsup$ as $S \to \infty$, we have
$$\mathrm{RV}_{\mu, K}(\Sigma) \leq
\int_\Sigma \int_{ S^\perp_x\Sigma}e^{f(x)} \left( e^{-\frac{2f(x)}{n+m-1}}\sqrt{-K} -\langle  \vec H_f (x), y \rangle \right)^n_+
 \,dy\, d\sigma(x)$$
because
\begin{align*}
\lim_{S\to\infty}&
\frac{
\displaystyle
\int_0^S
\left(e^{-\frac{2f(x)}{n+m-1}}
\frac{\operatorname{cs}_K(s)}{\operatorname{sn}_K(s)}- \langle  \vec H_f (x), y \rangle 
\right)_+^n
\operatorname{sn}_K^{n+m-1}(s)\,ds
}{
\displaystyle
\int_0^S \operatorname{sn}_K(s)^{n+m-1}\,ds
}\\
&\hspace{6cm}
=
\left( e^{-\frac{2f(x)}{n+m-1}}
\sqrt{-K} - \langle \vec H_f (x) , y \rangle 
\right)_+^n.
\end{align*}

% By substituting $a$ from \eqref{initial-a}, we have
% $$\mathrm{RV}_{\mu,K}(\Sigma) \leq 
% \int_\Sigma e^{f(x)}\!\int_{S_x^\perp \Sigma} \!\left( \! \sqrt{-K} e^{-\frac{2f(x)}{n+m-1}} + \!\left\langle \! -\vec H(x) - \frac{1}{n+m-1}\nabla f(x), y \right\rangle\right)_+^n\,dy d\sigma(x).$$
This finishes the proof of Theorem~\ref{main theorem}. 

If $K = 0$, we can further compute the integration 
by using the following (see Lemma 2.5 in \cite{brendle2026} for a similar proof)
\begin{equation}
     \int_{\mathbb S^{m-1}} (-\langle v, y \rangle )^n
_+ \, dy = |v|^n \frac{|\mathbb{S}^{n+m-1}|}{|\mathbb{S}^n|} \ \ \mbox{for any fixed} \ v \in \mathbb R^m,  \label{integral}
\end{equation}
Therefore we have 
$$\frac{|\mathbb{S}^n|}{|\mathbb{S}^{n+m-1}|}\mathrm{RV}_{\mu,K}(\Sigma) \leq 
\int_\Sigma e^{f(x)} \left| \vec H(x) +\frac{1}{n+m-1}(\nabla f(x))^\perp \right|^n d\sigma(x).$$
This is Corollary~\ref{Corollary K=0}. 

In particular, if $f =0$, then the left-hand side becomes $|\mathbb{S}^n|$ times  the usual asymptotic volume ratio $\theta$ defined in \eqref{theta}, which recovers the result by \cite{JK2025, Pan-Yi2026}.

\subsection{Equality case}
We now turn to the equality case in Theorem~\ref{main theorem} and show Proposition~\ref{prop:equality-Pulledback}.

Assume that $\Sigma$ is embedded and the following:
\begin{equation} 0< \mathrm{RV}_{\mu, K} (\Sigma) = \int_\Sigma e^{f(x)} \int_{S_x^\perp \Sigma} \left( \sqrt{-K}e^{-\frac{2f(x)}{n+m-1}} - \left\langle \vec H_f (x), y \right\rangle \right)_+^n \, dy d\sigma(x). \label{Eq:equality case}
\end{equation}

Let $$V : = \left\{(x,y) \in S^\perp \Sigma \, |\, \sqrt{-K}e^{-\frac{2f(x)}{n+m-1}} - \left\langle \vec H_f (x), y \right\rangle>0 \right\}.$$
For simplicity, let $a : = - \left\langle \vec H_f (x), y \right\rangle$.

%In order to match the the set $V$ with \textcolor{red}{???}
The set $V$ consists of the normal directions for which the limiting coefficient in the Jacobian estimate is strictly positive. On the other hand, before taking the limit, the Jacobian is expressed in terms of an $s$-dependent model factor. The following lemma relates the positivity of these two quantities. This allows us in Lemma~\ref{lem:equality-theta} to separate the directions with a positive asymptotic contribution from those whose normalized contribution vanishes.
\begin{lem}\label{lem:equality-setv}
    For $(x,y) \in S^\perp \Sigma$ and $K \leq 0$,
    $$\sqrt{ - K }e^{ - \frac{2f(x)}{n+m-1}} + a \geq 0 \, \iff \, e^{- \frac{2f(x)}{n+m-1}} \frac{\mathrm{cs}_K (s)}{\mathrm{sn}_K(s)} + a  \geq 0 \quad \text{for all } s >0.$$
\end{lem}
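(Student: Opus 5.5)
The key observation is that for $K\le 0$ the function $s\mapsto \mathrm{cs}_K(s)/\mathrm{sn}_K(s)$ is strictly decreasing on $(0,\infty)$, with limit $+\infty$ as $s\to 0^+$ and limit $\sqrt{-K}$ as $s\to\infty$. The limit $\sqrt{-K}$ covers $K=0$, where the ratio is $1/s\to 0$. Granting this, the lemma follows by comparing the value at infinity with the values at finite $s$. Write $c:=e^{-\frac{2f(x)}{n+m-1}}>0$. This is a fixed positive constant, because $x$ is fixed and $f(x)$ does not depend on $s$.

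First, I will verify the monotonicity and the limits. For $K=0$ the ratio is $1/s$, which is clearly decreasing to $0$. For $K<0$, write $\kappa=\sqrt{-K}$. Then $\mathrm{cs}_K/\mathrm{sn}_K=\kappa\coth(\kappa s)$. Its derivative is $-\kappa^2/\sinh^2(\kappa s)<0$, so it decreases strictly from $+\infty$ to $\kappa$. Equivalently, $(\mathrm{cs}_K/\mathrm{sn}_K)'=-(\mathrm{cs}_K^2+K\mathrm{sn}_K^2)/\mathrm{sn}_K^2=-1/\mathrm{sn}_K^2$, using the identity $\mathrm{cs}_K^2+K\mathrm{sn}_K^2=1$. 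Either way, $\mathrm{cs}_K(s)/\mathrm{sn}_K(s)>\sqrt{-K}$ for every $s>0$.

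Next, I prove ($\Rightarrow$). Assume $c\sqrt{-K}+a\ge 0$. For any $s>0$, multiplying the strict inequality above by $c>0$ gives
\[
c\,\frac{\mathrm{cs}_K(s)}{\mathrm{sn}_K(s)}+a> c\sqrt{-K}+a\ge 0 .
\]

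Then I prove ($\Leftarrow$). Assume $c\,\mathrm{cs}_K(s)/\mathrm{sn}_K(s)+a\ge 0$ for all $s>0$. Let $s\to\infty$. Since the left side converges to $c\sqrt{-K}+a$ and the condition $\ge 0$ is preserved under limits, we get $c\sqrt{-K}+a\ge0$.

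There is no real obstacle. The only point needing care is that the infimum $\sqrt{-K}$ is approached but never attained. This gives the strict inequality in the forward direction, which I expect to be useful later when separating $V$ from its complement. It also means the reverse direction needs a limit argument rather than evaluation at a particular $s$. The same reasoning shows the strict version: $c\sqrt{-K}+a>0$ holds exactly when the inequality $c\,\mathrm{cs}_K(s)/\mathrm{sn}_K(s)+a>0$ holds uniformly with a positive margin as $s\to\infty$.
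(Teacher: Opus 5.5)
Your proof is correct and follows essentially the paper's route: the converse is the same limit argument $s\to\infty$, and for the forward direction the paper (after disposing of $a\ge 0$) writes $c\cs_K(s)+a\sn_K(s)=\frac{1}{2\sqrt{-K}}\bigl(e^{\sqrt{-K}s}(c\sqrt{-K}+a)+e^{-\sqrt{-K}s}(c\sqrt{-K}-a)\bigr)$ with $c=e^{-\frac{2f(x)}{n+m-1}}$, which is an unpacked form of your bound $\cs_K/\sn_K>\sqrt{-K}$. Your formulation via $(\cs_K/\sn_K)'=-1/\sn_K^2$ is slightly cleaner: it treats $K=0$ and $K<0$ together, needs no case split on the sign of $a$, and gives the strict inequality directly.
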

\begin{proof}
    If $a \geq 0$, then the above quantities are all non-negative.
    Suppose $a < 0$.

    ($\Rightarrow$) When $K =0$, it is trivially true. Below we assume $K \not= 0$.
    
    From the identity
    \begin{align*}
        & \mathrm{sn}_K(s) \left( e^{-\frac{2f(x)}{n+m-1}} \frac{\mathrm{cs}_K (s)}{\mathrm{sn}_K(s)} + a \right) = e^{-\frac{2f(x)}{n+m-1}} \mathrm{cs}_K(s) + a \mathrm{sn}_K (s) \\
        & = \frac{1}{2\sqrt{-K}} e^{\sqrt{-K}s} \left( e^{-\frac{2f(x)}{n+m-1}} \sqrt{-K} +a\right) + \frac{1}{2\sqrt{-K}}e^{-\sqrt{-K}s} \left( e^{-\frac{2f(x)}{n+m-1}} \sqrt{-K} -a\right).
    \end{align*}
   Since $a <0$, from above, we can see that
    $$e^{- \frac{2f(x)}{n+m-1}} \sqrt{-K} + a \geq 0 \implies e^{- \frac{2f(x)}{n+m-1}} \frac{\mathrm{cs}_K (s)}{\mathrm{sn}_K(s)} + a  \geq 0 \quad \text{ for all }s >0.$$

    ($\Leftarrow$) 
    % On the other hand, if
    % $e^{-\frac{2f(x)}{n+m-1}} \mathrm{cs}_K(s) + a \mathrm{sn}_K(s) \geq 0$ for all $s >0$, then,
    % $$\mathrm{sn}_K(s) \left( e^{-\frac{2f(x)}{n+m-1}} \frac{\mathrm{cs}_K (s)}{\mathrm{sn}_K(s)} + a \right) \geq 0 \quad \text{ for all }s >0.$$
    By taking limit, we have
    $$\lim_{s \to \infty} \left( e^{- \frac{2f(x)}{n+m-1}} \frac{\mathrm{cs}_K(s)}{\mathrm{sn}_K(s)} + a\right) \geq 0 .$$
    This gives the left hand side. 
    $$e^{- \frac{2f(x)}{n+m-1}}\sqrt{-K} + a \geq 0.$$
\end{proof}
From the definition of $\theta$ in equations \eqref{def:theta-1} and \eqref{def:theta-2}, we have
$$|\mathrm{det} Q_t| = t^n \left( e^{-\frac{2f(x)}{n+m-1}}\frac{\mathrm{cs}_K(s(t))}{\mathrm{sn}_K(s(t))} - \langle \vec H_f (x), y \rangle \right)_+^n \theta_1 (x,y,t)$$
and
$$|\mathrm{det} (D \exp_x )_{ty} | = \frac{\theta_2(x,y,t) e^{f(t)} sn_K^{n+m-1} (s(t))}{t^{n+m-1}}.$$
By plugging those identities into the factorization formula \eqref{eq:main factorization}, we get
$$|\mathrm{det} D\Phi_{(x,y,t)}| = \theta_1 \theta_2 (x,y,t) e^{f(t)} \mathrm{sn}_K^{n+m-1}(s(t)) \left( e^{-\frac{2f(x)}{n+m-1}} \frac{\mathrm{cs}_K(s(t))}{\mathrm{sn}_K(s(t))} - \langle \vec H_f (x), y \rangle \right)_+^n.$$

\begin{lem}\label{lem:equality-theta}
    For $(x,y) \in V$, we have $S_{\mathrm{cut}}(x,y) = + \infty$, $\theta_1(x,y,t) =1$ and $\theta_2(x,y,t) = e^{f(x)}$ for all $t >0$.
\end{lem}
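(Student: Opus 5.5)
The plan is to turn the equality \eqref{Eq:equality case} into pointwise saturation of every inequality used in the proof of Theorem~\ref{main theorem}. First, extend the integrand to all $s>0$ by setting it to zero for $s\ge S_{\mathrm{cut}}(x,y)$, and write
\[
G_S(x,y):=\frac{\displaystyle\int_0^{\min\{S,S_{\mathrm{cut}}(x,y)\}}\theta_1\theta_2(x,y,t(s))\,e^{-f(x)}\,\Big(e^{-\frac{2f(x)}{n+m-1}}\tfrac{\mathrm{cs}_K(s)}{\mathrm{sn}_K(s)}+a\Big)_+^n\mathrm{sn}_K^{n+m-1}(s)\,ds}{\displaystyle\int_0^S\mathrm{sn}_K^{n+m-1}(s)\,ds}.
\]
Then $\mu_f(C_S(\Sigma))/\int_0^S\mathrm{sn}_K^{n+m-1}=\int_\Sigma\int_{S^\perp_x\Sigma}e^{f(x)}G_S\,dy\,d\sigma$. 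Here equality holds in the area formula \eqref{area-formula} because $\Sigma$ is embedded and $F$ is a diffeomorphism on $E_S$. By the monotonicity of $\theta_1$ (decreasing from $1$) and of $\theta_2$ (decreasing from $e^{f(x)}$), we have $0\le\theta_1\theta_2e^{-f(x)}\le 1$. Hence $G_S(x,y)\le$ the model ratio, which converges to $(\sqrt{-K}e^{-\frac{2f(x)}{n+m-1}}+a)_+^n$. Since the model ratios are uniformly bounded, the hypothesis (limsup equals the integral of the limits) together with a Fatou/reverse-Fatou argument should give $\lim_{S\to\infty}G_S(x,y)=(\sqrt{-K}e^{-\frac{2f(x)}{n+m-1}}+a)^n$ for a.e. $(x,y)$. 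By continuity in $(x,y)$ and openness of $V$, we would then upgrade this to every $(x,y)\in V$; this requires some care, and one may instead argue by contradiction on an open set of positive measure.

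Next, fix $(x,y)\in V$, so $c:=\sqrt{-K}e^{-\frac{2f(x)}{n+m-1}}+a>0$. By Lemma~\ref{lem:equality-setv}, the model factor is positive for all $s$, and as $s\to\infty$ it tends to $c>0$. Suppose $S_{\mathrm{cut}}(x,y)=S_0<\infty$. Then the numerator of $G_S$ stays bounded while the denominator tends to $\infty$ (for $K\le0$), so $G_S\to0<c^n$, a contradiction. Hence $S_{\mathrm{cut}}=\infty$.

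Now suppose $\theta_1\theta_2e^{-f(x)}(t_0)<1$ at some $t_0$. By monotonicity, $\theta_1\theta_2e^{-f(x)}\le\delta<1$ for all $t\ge t_0$. The integrand contribution from $[0,s(t_0)]$ is negligible after dividing by the divergent denominator, so $\limsup_S G_S\le\delta c^n<c^n$, again a contradiction. Therefore $\theta_1\theta_2\equiv e^{f(x)}$. Combined with $\theta_1\le1$ and $\theta_2\le e^{f(x)}$, this forces $\theta_1\equiv1$ and $\theta_2\equiv e^{f(x)}$.

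The main obstacle is the first step: passing from equality of the integrated limsup to the pointwise statement at every $(x,y)\in V$. The difficulty is that the limsup of an integral is not the integral of pointwise limits. I would handle it by noting that $G_S\le$ model ratio $\to c_+^n$ uniformly on compacta, so $\limsup_S\int G_S\le\int\limsup_S G_S\le\int c_+^n$ by reverse Fatou. Equality then forces $\limsup_S G_S=c_+^n$ a.e. The arguments in the second and third paragraphs only need the limsup, and they produce a strict deficit on a neighborhood, via lower semicontinuity of $S_{\mathrm{cut}}$ and continuity of $\theta_i$ in $(x,y)$. This rules out failure on an open subset of $V$, and hence at every point of $V$.
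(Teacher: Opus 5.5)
Your overall route is the paper's. You normalize by the model factor, use the uniform bound and reverse Fatou to get $\limsup_S G_S=c_+^n$ almost everywhere, note that $S_{\mathrm{cut}}<\infty$ forces $G_S\to0$, and use monotonicity of $\theta_1\theta_2$ together with $\theta_1\le1$, $\theta_2\le e^{f(x)}$ to force $\theta_1\equiv1$, $\theta_2\equiv e^{f(x)}$. Your pointwise contradictions are an unpacked version of the paper's chain of inequalities through the set $A$.

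The gap is in the step you flag yourself: passing from almost every point of $V$ to every point, specifically for the cut time. Lower semicontinuity (of $T_{\mathrm{cut}}$, hence of $S_{\mathrm{cut}}$) says that $\{T_{\mathrm{cut}}>t_0\}$ is open. That is exactly what the $\theta$-part needs, because it lets you evaluate $\theta_1\theta_2$ at time $t_0$ on a whole neighbourhood and spread the deficit $\delta<1$ to a set of positive measure. But consider a single point $(x_0,y_0)\in V$ with $S_{\mathrm{cut}}(x_0,y_0)<\infty$. To turn it into a positive-measure set on which $G_S\to0$, you need $S_{\mathrm{cut}}$ to stay bounded \emph{above} near $(x_0,y_0)$, i.e.\ upper semicontinuity. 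A lower semicontinuous function can equal $+\infty$ on a dense set and be finite at one point. So, as written, your argument only gives $S_{\mathrm{cut}}=\infty$ almost everywhere on $V$. At the remaining points the statement ``$\theta_1\theta_2=e^{f(x)}$ for all $t>0$'' cannot even be formulated.

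The missing ingredient is the one the paper invokes: the normal cut time $T_{\mathrm{cut}}\colon S^\perp\Sigma\to(0,\infty]$ is continuous. Suppose $T_{\mathrm{cut}}(x_0,y_0)=T_0<\infty$. Then $T_{\mathrm{cut}}<T_0+1$ on an open neighbourhood $N\subset V$ (note $\overline N$ is compact since $S^\perp\Sigma$ is). $f$ is bounded on the compact set $\{\exp_x(ty):(x,y)\in\overline N,\ 0\le t\le T_0+1\}$, so $S_{\mathrm{cut}}$ is bounded on $N$. Hence $G_S\to0$ on $N$, contradicting $\limsup_S G_S=c^n>0$ a.e.\ on $N$. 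Therefore $T_{\mathrm{cut}}=\infty$ on all of $V$, the $\theta_i(x,y,t)$ are defined for every $t>0$, and your neighbourhood argument for $\theta_1\theta_2$ goes through at every point of $V$.

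One caveat, which the paper also passes over. When $f$ is bounded above, $T_{\mathrm{cut}}=\infty$ gives $S_{\mathrm{cut}}=s_{x,y}(\infty)=\infty$ at once. In general, however, $(x,y)\mapsto s_{x,y}(\infty)$ is only lower semicontinuous (by Fatou). So knowing it is infinite almost everywhere on $V$ does not by itself give it everywhere.
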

\begin{proof}
    Let $$\theta_{x,y}(t(s)) := \begin{cases}
        \theta_1 ( x, y, t_{x,y}(s)) \theta_2 ( x,y, t_{x,y}(s)) &\mbox{ if } 0 < s< S_{\mathrm{cut}}(x,y)\\
        0 & \mbox{ if } s \geq S_{\mathrm{cut}}(x,y)
    \end{cases}$$
    where $t_{x,y}(s)$ is the inverse of $s_{x,y}(t)$.
    Since $\theta_1$ and $\theta_2$ are nonincreasing and
    $$\lim_{t\to0^+}\theta_1(x,y,t)=1,\qquad\lim_{t\to0^+}\theta_2(x,y,t)=e^{f(x)},
    $$
    we have
    $$
        0\leq \theta_{x,y}(s)\leq e^{f(x)}.
    $$
    
    By the area formula as in \eqref{area-formula}, we have 
    $$\mu_f ( C_S(\Sigma)) = \int_\Sigma \int_{S_x^\perp \Sigma} \int_0^S \theta(x,y,t(s)) \operatorname{sn}_K^{n+m-1}(s)\left( e^{-\frac{2f(x)}{n+m-1}} \frac{\operatorname{cs}_K(s)}{\operatorname{sn}_K(s)} - \langle \vec H_f (x), y \rangle \right)_+^n.$$
    Define
    $$G_{x,y}(S) : = \frac{\int_0^S \theta(x,y,t(s)) \operatorname{sn}_K^{n+m-1}(s)\left( e^{-\frac{2f(x)}{n+m-1}} \frac{\operatorname{cs}_K(s)}{\operatorname{sn}_K(s)} - \langle \vec H_f (x), y \rangle \right)_+^n\, ds}{\int_0^S \operatorname{sn}_K^{n+m-1}(s) ds}.$$
    Then $G_{x,y}(S)$ is uniformly bounded for $(x,y) \in S^\perp \Sigma$ and $S>1$.
    %(\textcolor{red}{Need to verify this? Seems OK as if we take the limit as S go to infinite it's bounded}). 
    Thus,
    $$\limsup_{S \to \infty} \frac{\mu_f (C_S (\Sigma))}{\int_0^S \operatorname{sn}_K^{n+m-1} (s) \, ds } \leq  \int_{\Sigma} \int_{S_x^\perp \Sigma}\limsup_{S \to \infty } G_{x,y}(S) \, dy \, d\sigma(x).$$

    If $(x,y)\notin V$, then $\sqrt{ - K }e^{ - \frac{2f(x)}{n+m-1}} + a\leq 0$.
    If $\sqrt{ - K }e^{ - \frac{2f(x)}{n+m-1}} + a<0$, then
    $$
        \bigl(e^{- \frac{2f(x)}{n+m-1}} \frac{\mathrm{cs}_K (s)}{\mathrm{sn}_K(s)} + a \bigr)_+=0
    $$
    for all sufficiently large $s$ by Lemma~\ref{lem:equality-setv}. If
    $\sqrt{ - K }e^{ - \frac{2f(x)}{n+m-1}} + a=0$, then
    $$
        \left(e^{- \frac{2f(x)}{n+m-1}} \frac{\mathrm{cs}_K (s)}{\mathrm{sn}_K(s)} + a \right)_+\longrightarrow 0
        \qquad\text{as }s\to\infty.
    $$
    Therefore, in either case,
    $$
        \lim_{S\to\infty}G_{x,y}(S)=0.
    $$

    % For $(x,y) \in S^\perp \Sigma \setminus V$, the integrand of the numerator becomes zero after some number by Lemma~\ref{lem:equality-setv}. Thus, for $(x,y) \in S^{\perp }\Sigma \setminus V$, we have
    % $$\lim_{S \to \infty} G_{x,y}(S) = 0.$$
    Suppose that $S_{\mathrm{cut}}(x,y)<+\infty$.
    Then $\theta_{x,y}(s)=0$ for all
    $s\geq S_{\mathrm{cut}}(x,y)$. Hence the numerator in
    $G_{x,y}(S)$ remains bounded as $S\to\infty$, whereas
    $$
        \int_0^S\sn_K(s)^{n+m-1}\,ds\longrightarrow+\infty.
    $$
    It follows that
    $$
        \lim_{S\to\infty}G_{x,y}(S)=0.
    $$

    Let $A : = \{ (x,y) \in V \, |\, S_{\mathrm{cut}(x,y)} = + \infty \}$.
    Then the above observations give
    $$\limsup_{S \to \infty} \frac{\mu_f (C_S (\Sigma))}{\int_0^S \operatorname{sn}_K^{n+m-1} (s) \, ds } \leq  \int_{A}\limsup_{S \to \infty } G_{x,y}(S) \, dy \, d\sigma(x).$$
    By the definition of $\mathrm{RV}_{\mu , K } (\Sigma) $ defined in \eqref{Volume ratio}, we have
    \begin{align*}
        \mathrm{RV}_{\mu , K } (\Sigma) & \leq \int_A \limsup_{S \to \infty} G_{x,y}(S) \, dy \, d \sigma (x)\\
        & = \int_A \limsup_{S \to \infty} \theta(x,y, t(s))  \left( e^{-\frac{2f(x)}{n+m-1}} \frac{\operatorname{cs}_K(s)}{\operatorname{sn}_K(s)} - \langle \vec H_f (x), y \rangle \right)_+^n\, dy \, d\sigma (x)\\
        &\leq \int_A e^{f(x)} \left( e^{-\frac{2f(x)}{n+m-1}}\sqrt{-K } - \langle \vec H_f (x), y \rangle  \right)_+^n dy \, d\sigma (x)\\
        & \leq \int_{V} e^{f(x)} \left( e^{-\frac{2f(x)}{n+m-1}}\sqrt{-K } - \langle \vec H_f (x), y \rangle  \right)_+^n dy \, d\sigma (x).
    \end{align*}
    By the equality assumption \eqref{Eq:equality case}, the last
    quantity is equal to $\operatorname{RV}_{\mu,K}(\Sigma)$.
    Therefore, equality holds throughout.
    This implies that
    \begin{enumerate}
        \item $V \setminus A$ has measure zero
        \item $\theta_1 (x,y,t) = 1$, $\theta_2 ( x,y, t) = e^{f(x)}$ a.e. $(x,y) \in A$ and every $t \geq 0$.
    \end{enumerate}
    The first one implies that $S_{\mathrm{cut}}(x,y) = \infty$ for almost everywhere in $V$. By the continuity of the normal cut-time function, $S_{\mathrm{cut}}(x,y) = \infty$ for all $(x,y) \in V$.
    Thus, for every $(x,y)\in V$ and every $t>0$,
    $$
        \theta_1(x,y,t)=1,
        \qquad
        \theta_2(x,y,t)=e^{f(x)}.
    $$

\end{proof}

By Lemma~\ref{lem:equality-theta}, we obtain
\begin{equation}\label{equality-dexp}
    |\mathrm{det} (D\exp_x)_{ty}| = \frac{e^{f(x) + f(t)}\operatorname{sn}_K^{n+m-1}(s(t))}{t^{n+m-1}}
\end{equation}
and
\begin{equation}\label{equality-qt}
    |\mathrm{det} Q_t| = t^n \left( e^{-\frac{2f(x)}{n+m-1}}\frac{\operatorname{cs}_K(s(t))}{\operatorname{sn}_K(s(t))} - \langle \vec H _f (x) , y \rangle \right)^n.
\end{equation}
% By plugging those identities into the factorization formula \eqref{eq:main factorization}, we get
% \begin{equation}
%     |\mathrm{det} D \Phi_{(x,y,t)}| = e^{\frac{-n+m-1}{n+m-1}f(x) + f(t)} \operatorname{sn}_K^{m-1}(s(t)) \left( \operatorname{cs}_K(s(t)) - e^{\frac{2f(x)}{n+m-1}}\langle \vec H_f (x), y \rangle \operatorname{sn}_K(s(t))\right)^n.
% \end{equation}

The equality in \eqref{equality-qt} implies the equality in Proposition~\ref{proposition At estimate} (See Remark~\ref{remark-rigidity-prop1-10}).
Thus, we have
\begin{equation}\label{Qt-equality}
    Q_t = t\left(e^{-\frac{2f(x)}{n+m-1}} \frac{\mathrm{cs}_K(s(t))}{\mathrm{sn}_K(s(t))} - \langle \vec H_f(x) , y\rangle\right)\operatorname{Id}_{T_x\Sigma}.
\end{equation}

On the other hand, the equality in \eqref{equality-dexp} implies the equality case in Proposition~\ref{prop-weighted hessian} (see Remark~\ref{rem:prop-1-1}). Thus,
$$\left.R_{\gamma_{x,y}'(t)}\right|_{(\gamma_{x,y}'(t))^\perp} = \left( K e^{-\frac{4f(\gamma_{x,y}(t))}{n+m -1}} - \frac{(f\circ\gamma_{x,y})''(t)}{n+m -1} - \frac{((f\circ \gamma_{x,y})'(t))^2}{(n+m -1)^2} \right) \operatorname{Id}_{(\gamma_{x,y}'(t))^\perp}$$
and
$$(\operatorname{Hess} r_x)_{\gamma_{x,y}'(t)}  = \left(e^{- \frac{2f(\gamma_{x,y}(t))}{n+m-1}} \frac{\operatorname{cs}_K(s(t))}{\operatorname{sn}_K(s(t))} + \frac{(f \circ\gamma_{x,y})'(t)}{n+m-1}\right)\operatorname{Id}_{(\gamma_{x,y}'(t))^\perp}.$$
This implies that for $z \in y^\perp \subset T_x M$
\begin{equation}\label{Et-equality}
    D (\exp_x)_{ty}(z) = \left( \frac{\operatorname{sn}_K(s(t))}{t} e^{\frac{f(x) + f(t)}{n+m-1}}\right)P_tz,
\end{equation}
where $P_tz$ is a parallel transport of $z$ along $\gamma_{x,y}(t)$.

\begin{prop}\label{prop:equality-Pulledback} Assume that $\Sigma$ is embedded and \eqref{Eq:equality case} holds. Then for every $(x,y,t)\in V\times(0,\infty)$, the pullback metric of
    $\Phi(x,y,t)=\exp_x(ty)$ is given by
    $$\Phi^*g = (a_t  b_t )^2 g_{\mathcal H} + a_t^2 g_{\mathcal V} + dt^2.$$
    Here $\mathcal H$ and $\mathcal V$ denote, respectively, the horizontal and vertical distributions on $S^\perp\Sigma$ induced by the normal connection, $a_t$ and $b_t$ are given in \eqref{eqn:a_t} and \eqref{def-b-t} respectively. Thus, at $(x,y)$,
    $$\mathcal H_{(x,y)}\simeq T_x\Sigma, \qquad \mathcal V_{(x,y)}\simeq T_x^\perp\Sigma\cap y^\perp$$
    and $g_{\mathcal H}$ and $g_{\mathcal V}$ denote the corresponding induced metrics.
\end{prop}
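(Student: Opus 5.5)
The plan is to compute $D\Phi_{(x,y,t)}$ on the three pieces of $T_{(x,y,t)}(S^\perp\Sigma\times(0,\infty))$, namely $\partial_t$, vertical vectors $\mathcal V_{(x,y)}\simeq T_x^\perp\Sigma\cap y^\perp$, and horizontal vectors $\mathcal H_{(x,y)}\simeq T_x\Sigma$, and then to show that their images are mutually orthogonal with the stated lengths.

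First, $D\Phi(\partial_t)=\gamma_{x,y}'(t)$ is a unit vector. By the Gauss lemma it is orthogonal to the images of all vectors tangent to $S^\perp\Sigma$, because these are Jacobi fields vanishing in the normal component at $t=0$ and $\gamma$ is a normal geodesic. This gives the $dt^2$ term with no cross terms.

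Second, for a vertical vector $z\in T_x^\perp\Sigma\cap y^\perp$, the curve $\theta\mapsto(x,\cos\theta\, y+\sin\theta\, z)$ shows that $D\Phi(z)=(D\exp_x)_{ty}(tz)$. By \eqref{Et-equality} this equals $a_tP_tz$, where
$$a_t:=\operatorname{sn}_K(s(t))\,e^{\frac{f(x)+f(t)}{n+m-1}},$$
which is how I expect \eqref{eqn:a_t} is defined. This gives the $a_t^2g_{\mathcal V}$ term.

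Third, for a horizontal $e\in T_x\Sigma$, the Jacobi field $J(t)=D\Phi(e)$ satisfies $J(0)=e$ and $J'(0)=-S_y(e)+(\text{normal part})$. The horizontal lift via the normal connection kills the normal part of $J'(0)$. Equality \eqref{Qt-equality}, i.e.\ $A_t$ scalar, together with the factorization of \cite[Lemma 3.2]{Pan-Yi2026} identifies $D\mathrm{Exp}^\perp$ on $\mathcal H$ as $(D\exp_x)_{ty}$ composed with $Q_t$ up to the identification $T_x\Sigma\hookrightarrow T_xM$. Concretely, I would write $J(t)=(D\exp_x)_{ty}(t\,w)$ with
$$w=t^{-1}Q_t e=\Bigl(e^{-\frac{2f(x)}{n+m-1}}\frac{\operatorname{cs}_K(s)}{\operatorname{sn}_K(s)}-\langle\vec H_f,y\rangle\Bigr)e.$$
Since $e\perp y$, \eqref{Et-equality} then gives $J(t)=a_tb_tP_te$, where
$$b_t:=e^{-\frac{2f(x)}{n+m-1}}\frac{\operatorname{cs}_K(s(t))}{\operatorname{sn}_K(s(t))}-\langle\vec H_f(x),y\rangle,$$
which I take to be \eqref{def-b-t}. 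Because parallel transport is an isometry and $P_t(T_x\Sigma)\perp P_t(T_x^\perp\Sigma\cap y^\perp)$, the horizontal and vertical images are orthogonal and the metric is diagonal as claimed.

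The main obstacle is the horizontal step: justifying rigorously that $D\mathrm{Exp}^\perp$ restricted to horizontal lifts equals $(D\exp_x)_{ty}\circ Q_t$ exactly, and not merely at the level of determinants. I would try to extract this from the proof of the factorization in \cite{Pan-Yi2026}. The idea is that $D\Phi(e)$ is the Jacobi field along $\gamma$ with $J(0)=e$ and $J(t_0)$ prescribed. The boundary-value Jacobi fields $J_i^{t_0}$, together with the field $Y_e$ vanishing at $0$, decompose any Jacobi field, and the Hessian of $d^2_{\gamma(t)}$ relates $D_{\gamma'}J_i^{t_0}(0)$ to $\operatorname{Hess}d_{\gamma(t)}$. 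Equality in Proposition~\ref{proposition At estimate} forces $\operatorname{Hess}d_{\gamma(t)}|_{T_x\Sigma}$ to be scalar, and $\mathrm{II}_x(\cdot,\cdot)\cdot y$ to be umbilic. This yields $J(t)=b_t\,(D\exp_x)_{ty}(te)$.

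Finally, I need $b_t>0$ on $V$, which follows from Lemma~\ref{lem:equality-setv}. Lemma~\ref{lem:equality-theta} guarantees there is no cut point, so the formula holds for all $t>0$.
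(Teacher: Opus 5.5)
Your proposal follows the paper's proof essentially step for step. You split $W=X^H+U^V+c\,\partial_t$, use the Pan--Yi factorization together with \eqref{Qt-equality} to get $d\Phi(X^H)=E_t(t\,b_tX)$, replace $E_t$ by $\tfrac{a_t}{t}P_t$ on $y^\perp$ via \eqref{Et-equality}, and read off the diagonal metric.

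The one point to state explicitly in your horizontal step is what the paper calls the vanishing of the mixed block. It is not enough that $\operatorname{Hess} d_{\gamma_{x,y}(t_0)}$ be scalar on $T_x\Sigma\times T_x\Sigma$; you need it to map $T_x\Sigma$ into itself. Otherwise the initial derivatives of the $J_i^{t_0}$ acquire components in $T_x^\perp\Sigma\cap y^\perp$, and these produce $\mathcal H$--$\mathcal V$ cross terms in $\Phi^*g$. You can get this in either of two ways:
\begin{enumerate}
\item Read the rigidity in Remark~\ref{rem:prop-1-1} as an operator identity. Equality in $|\mathcal S_tE|^2\ge\langle\mathcal S_tE,E\rangle^2$ forces $T_x\Sigma$ to be an eigenspace.
\item Use \eqref{Et-equality}, which makes $R_{\gamma_{x,y}'}$ a scalar multiple of the identity on $(\gamma_{x,y}')^{\perp}$. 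Then each $J_i^{t_0}$ is a scalar function times the parallel field through $e_i$.
\end{enumerate}
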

\begin{proof}
    Define 
    $$R: S^\perp \Sigma \times (0,\infty) \longrightarrow T^\perp \Sigma \setminus \{0\}, \quad R(x,y,t) = (x,ty).$$ 
    Then
    $$\Phi = \exp^\perp \circ R.$$

    Fix $(x,y,t)$ and write
    $$W=X^H+U^V+c\partial_t,$$
    where
    $X \in T_x\Sigma$ and $U\in T_x^\perp\Sigma\cap y^\perp.$
    Here $X^H$ and $U^V$ denote the horizontal and vertical lifts, respectively. We have
    $$dR_{(x,y,t)}(W) = X^H+(tU+cy)^V.$$

    Set
    $$E_t:=(d\exp_x)_{ty}:T_xM\longrightarrow T_{\gamma(t)}M, \qquad  \gamma(t):=\exp_x(ty)$$
    By \cite[Lemma 3.2]{Pan-Yi2026},  together with the vanishing of the mixed block, we obtain
    $$(d\exp^\perp)_{(x,ty)}(X^H+\xi^V) = E_t(Q_tX+\xi).$$
    Consequently, using \eqref{Qt-equality},
    \begin{align*}
        d\Phi_{(x,y,t)}(W)  =  E_t(Q_tX+tU+cy)
         = E_t(t b_t X+tU+cy),
    \end{align*}
    where 
    \begin{equation}\label{def-b-t}
        b_t  = e^{-\frac{2f(x)}{n+m-1}} \frac{\operatorname{cs}_K (s(t))}{\operatorname{sn}_K(s(t))} - \langle \vec H_f(x), y \rangle. 
    \end{equation}

    Let $P_t:T_xM\to T_{\gamma(t)}M$ denote parallel transport along $\gamma$. By \eqref{Et-equality},
    $$E_t(v)=\frac{a_t}{t}P_tv \quad\text{for }v\in y^\perp, \qquad E_t(y)=\gamma'(t).$$
    Since $X,U\in y^\perp$, it follows that
    $$d\Phi_{(x,y,t)}(W)= a_t b_t  P_tX + a_tP_tU + c\gamma'(t),$$
    where
    \begin{equation}
        a_t = e^{\frac{f(x) + f(t)}{n+m-1}} \operatorname{sn}_K(s(t)).  \label{eqn:a_t}
        \end{equation}

    Now let
    $$W'=(X')^H+(U')^V+c'\partial_t.$$    
    Then 
    \begin{align*}
        \Phi^* g (W,W')
         = a_t^2 b_t^2 g (X , X') + a_t^2  g( U, U')  + cc'.
    \end{align*}
    This proves
    $$\Phi^*g = (a_t  b_t )^2 g_{\mathcal H} + a_t^2 g_{\mathcal V} + dt^2.$$
    \end{proof}

\section{Sobolev and Isoperimetric Inequalities for $\Ric_f^1 \ge 0$}
\label{Section-Sol-Iso}
We first prove a Sobolev inequality which improves \cite[Theorem~5.1]{Fujitani26} by
weakening the curvature assumption from $\mathrm{Ric}_f^0 \geq 0$ to
$\mathrm{Ric}_f^1 \geq 0$. This will imply Theorem \ref{Iso inequa thm}. 

The main idea is to choose $e_1$ as the geodesic direction $\nabla u(x)$.
With this choice, $Q_{11}$ can be estimated directly, while the remaining
trace $\sum_{i=2}^{n}Q_{ii}(t)$ 
satisfies a Riccati-type inequality with the desired coefficient $\frac{1}{n-1}$.
The two estimates are combined using the AM--GM inequality,
following the argument in \cite[Page~10]{chong2025sobolev}.

We now prove the following Sobolev inequality, which is Theorem 5.1 in \cite{Fujitani26}, but with $\mathrm{Ric}_f^0 \geq 0$ replaced by
$\mathrm{Ric}_f^1 \geq 0$. Originally, this was proven in \cite{Brendle2023} for $\Ric \ge 0$ (i.e. $f =0$). 

\begin{thm} \label{thm:Sobolev-inequa}
    Let $(M^n, g, e^{-f}dvol)$ be a complete noncompact smooth metric measure space satisfying $\mathrm{Ric}_f^1 \geq 0$ and suppose that $f$ is bounded from below.
 Let
    $$\theta_f: = \limsup_{r \to \infty} \frac{\operatorname{vol}_f(B_x(r))}{r^n},$$
    where $x \in M$ is arbitrary.
    Then for any bounded smooth domain $\Omega \subset M$
    and any positive function $\phi \in C^\infty (\overline{\Omega})$, the following inequality holds.
    \begin{equation}
    \int_\Omega \left|\nabla \left(e^{\frac{2f}{n}}\phi\right)\right|\,d\operatorname{vol}_f +  \int_{\partial \Omega} e^{\frac{2f}{n}}\phi \,d\sigma_f \geq n \,\theta_f^\frac{1}{n} e^{\frac{2}{n}\inf_Mf}\left(\int_\Omega \phi^{\frac{n}{n-1}} \,d\operatorname{vol}_f \right)^{\frac{n-1}{n}}. \label{Sobolev-inequa}
    \end{equation}
\end{thm}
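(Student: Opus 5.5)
We follow Brendle's ABP approach for the Michael--Simon/Sobolev inequality, as adapted to the weighted setting by Fujitani--Sakurai, and change only the Jacobian estimate so that it uses $\mathrm{Ric}_f^1\ge 0$. By scaling, normalize $\phi$ so that
\[
\int_\Omega |\nabla (e^{\frac{2f}{n}}\phi)|\,d\operatorname{vol}_f+\int_{\partial\Omega}e^{\frac{2f}{n}}\phi\,d\sigma_f = n\int_\Omega \phi^{\frac{n}{n-1}}\,d\operatorname{vol}_f .
\]
We solve the weighted Neumann problem
\[
\operatorname{div}_f\!\left(e^{\frac{2f}{n}}\phi\,\nabla u\right)= n\phi^{\frac{n}{n-1}} - |\nabla(e^{\frac{2f}{n}}\phi)| \ \text{ in }\Omega, \qquad \langle\nabla u,\nu\rangle=1 \ \text{ on }\partial\Omega,
\]
where $\operatorname{div}_f X=e^{f}\operatorname{div}(e^{-f}X)$. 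The normalization is exactly the solvability condition, and $u$ is $C^{2,\gamma}$. Define the contact set $U=\{x\in\Omega\setminus\partial\Omega: |\nabla u(x)|<1\}$ and
\[
A_r=\{\bar x\in U:\ r u(x)+\tfrac12 d(x,\exp_{\bar x}(r\nabla u(\bar x)))^2\ge r u(\bar x)+\tfrac12 r^2|\nabla u(\bar x)|^2\ \ \forall x\in\Omega\}.
\]
The standard argument shows that $\Phi_r(x)=\exp_x(r\nabla u(x))$ maps $A_r$ onto $\{p: \sigma r<d(x,p)<r \ \forall x\in\Omega\}$ for every $\sigma\in(0,1)$. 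Hence the $f$-volume of this annular region is bounded by $\int_{A_r}|\det D\Phi_r|\,e^{-f(\Phi_r)}\,d\operatorname{vol}$. As $r\to\infty$, this region has $f$-volume $\ge \theta_f(1-\sigma^n) r^n - o(r^n)$.

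The heart of the proof is the Jacobian bound. Fix $\bar x\in A_r$, let $\gamma(t)=\exp_{\bar x}(t\nabla u(\bar x))$, and take a parallel orthonormal frame with $e_n$ parallel to $\nabla u(\bar x)$ (if $\nabla u(\bar x)\neq 0$). Set $P(t)=D\Phi_t$ expressed through the Jacobi fields $J_i$ with $J_i(0)=e_i$ and $J_i'(0)=\sum_j(D^2u)(e_i,e_j)e_j$. Minimality at $\bar x\in A_r$ gives $P(t)$ nondegenerate and $P^{-1}P'$ symmetric for $t\in(0,r)$. Writing $Q=P^{-1}P'$, we get the Riccati inequality $Q'\le -Q^2-R$. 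Following \cite[Page 10]{chong2025sobolev}, we treat the $\gamma'$-direction separately. Using the first-order data and the Riccati equation, $Q_{nn}$ is controlled by $\frac{1}{t+1/ \lambda}$-type terms. The trace of the block orthogonal to $\gamma'$, $q(t)=\sum_{i<n}Q_{ii}$, satisfies $q'\le -\frac{1}{n-1}q^2-\Ric(\gamma',\gamma')$.

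We then run exactly the argument of Proposition~\ref{prop-weighted hessian} with $\ell=n-1$. Set $q_f=q-\frac{1}{n-1}(f\circ\gamma)'$ and multiply by $e^{\frac{2f}{n-1}}$, so that the curvature term becomes $\mathrm{Ric}_f^1\ge 0$ and the coefficient $\frac1{n-1}$ matches. Comparing with the model solution yields $\log\det$ monotonicity for the transverse block, with the weight factor $e^{f(\gamma(t))}$ entering as in the estimate for $\theta_2$. Combining with the $nn$-entry via AM--GM, we expect
\[
|\det D\Phi_r(\bar x)|\,e^{-f(\Phi_r(\bar x))}\le e^{-f(\bar x)}\, e^{\text{(power of }f)}\Bigl(1+\tfrac{r}{n}\,(\text{weighted Laplacian of }u\text{ at }\bar x)\Bigr)^n ,
\]
with the equation for $u$ giving $\Delta_f u\cdot e^{\frac{2f}{n}}\phi\le n\phi^{\frac{n}{n-1}}-\langle\nabla(e^{\frac{2f}{n}}\phi),\nabla u\rangle-|\nabla(e^{\frac{2f}{n}}\phi)|\le n\phi^{\frac{n}{n-1}}$ on $U$. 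The exponential weight factors are bounded using $f\ge\inf_M f$, and this produces the constant $e^{\frac{2}{n}\inf f}$.

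Finally, we divide by $r^n$, let $r\to\infty$, then $\sigma\to 0$. This gives $\theta_f\,e^{2\inf f}\le\int_\Omega\phi^{\frac{n}{n-1}}d\operatorname{vol}_f$ after the normalization, which rescales to \eqref{Sobolev-inequa}.

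The main obstacle is the Jacobian step: making the $\frac{1}{n-1}$-Riccati argument for the transverse block interact correctly with the reparametrized distance $s(t)$ along $\gamma$. As $r\to\infty$, $s$ must be compared with $t$ using only the lower bound on $f$, so that the $e^{-\frac{4f}{n-1}}$ and $e^{\frac{2f}{n}}$ weights collapse to the factor $e^{\frac{2}{n}\inf f}$. I would first try to bound $s(t)\le t\,e^{-\frac{2\inf f}{n-1}}$ and check that the exponents of the weights combine to exactly $\frac{2}{n}$.
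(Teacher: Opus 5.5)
Your outline follows the paper's route: the same normalization, the same Neumann problem (your $\operatorname{div}_f$ form is the paper's equation), the same contact set, splitting off the $\gamma'$-direction where $R_{nn}=0$, a $\frac{1}{n-1}$-Riccati inequality on the transverse block in the reparametrized variable $s$, and AM--GM. However, the Jacobian estimate is the only place where $\mathrm{Ric}_f^1$ (rather than $\mathrm{Ric}_f^0$) enters, and you leave it as ``we expect''. The formula you do give for the weighted transverse quantity is also wrong. With $\ell=n-1$, the correction in Proposition~\ref{prop-weighted hessian} is $\frac{\ell}{n-1}(f\circ\gamma)'=(f\circ\gamma)'$, so you need $q_f=q-(f\circ\gamma)'$. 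With your $q_f=q-\frac{1}{n-1}(f\circ\gamma)'$, three things go wrong:
\begin{enumerate}
\item the curvature term produced is $\mathrm{Ric}+\frac{1}{n-1}\operatorname{Hess}f+\frac{1}{(n-1)^3}df\otimes df$, not $\mathrm{Ric}_f^1$;
\item the integrating factor is no longer $e^{\frac{2f}{n-1}}$;
\item since $\frac{d}{dt}\log\det P=Q_{nn}+q$, the monotone quantity carries the weight $e^{-f/(n-1)}$, not the $e^{-f}$ required by $d\operatorname{vol}_f$.
\end{enumerate}

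With the correct $q_f$ and $u_{nn}:=\operatorname{Hess}u(e_n,e_n)$, the missing argument runs as follows. The radial entry satisfies $Q_{nn}(t)\le\frac{u_{nn}}{1+u_{nn}t}$ in the variable $t$. The transverse part satisfies $\tilde\lambda(s)\le\frac{\tilde\lambda(0)}{1+\tilde\lambda(0)s/(n-1)}$ in the variable $s$, with $\tilde\lambda(0)=e^{\frac{2f(\bar x)}{n-1}}(\Delta_f u-u_{nn})(\bar x)$. Since $\frac{d}{ds}\log(e^{-f}\det P)=\tilde\lambda+Q_{nn}\frac{dt}{ds}$, integrating gives
\[
e^{-f(\gamma(t))}\det P(t)\le e^{-f(\bar x)}\Bigl(1+\tfrac{\tilde\lambda(0)}{n-1}\,s\Bigr)_+^{n-1}(1+u_{nn}t)_+ .
\]
Now factor out $e^{2f(\bar x)}\,t\,s^{n-1}$ \emph{without} yet comparing $s$ and $t$. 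Apply $a^{n-1}b\le(\frac{n-1}{n}a+\frac1n b)^n$ with $a=\frac{e^{-2f(\bar x)/(n-1)}}{s}+\frac{\Delta_f u-u_{nn}}{n-1}$ and $b=\frac1t+u_{nn}$. The $u_{nn}$ terms cancel exactly, which matters because the equation controls only $\Delta_f u$, not $u_{nn}$. What remains is $e^{f(\bar x)}\,t\,s^{n-1}\bigl(\frac{n-1}{n}\frac{e^{-2f(\bar x)/(n-1)}}{s}+\frac{\Delta_f u}{n}+\frac{1}{nt}\bigr)^n$. Only at this point use $s\le te^{-\frac{2\inf f}{n-1}}$, i.e.\ $ts^{n-1}\le e^{-2\inf f}t^n$, together with $e^{\frac{2f}{n}}\Delta_f u\le n\phi^{\frac{1}{n-1}}$. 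After dividing by $r^n$ this gives the asymptotic density $e^{-2\inf f}e^{-f(\bar x)}\phi^{\frac{n}{n-1}}$.

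If you substitute $s\le te^{-2\inf f/(n-1)}$ first, as you propose, the $u_{nn}$-coefficients in the two factors differ by $e^{2(f(\bar x)-\inf f)/(n-1)}$. A direct AM--GM then no longer eliminates $u_{nn}$.

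A smaller point concerns the volume lower bound. $\theta_f$ is a $\limsup$, and $\operatorname{vol}_f(B(r))/r^n$ need not converge here, so ``$\ge\theta_f(1-\sigma^n)r^n-o(r^n)$'' is not justified for all large $r$. You can fix this in either of two ways:
\begin{enumerate}
\item argue with $\limsup(a_r-b_r)\ge\limsup a_r-\limsup b_r$;
\item as the paper does, use the set $\{q\in M\setminus\Omega:\ d(x,q)<r\ \forall x\in\Omega\}$, which contains $B_{x_0}(r-\operatorname{diam}\Omega)\setminus\Omega$.
\end{enumerate}
Note that $\Phi_r(A_r)$ only contains this set rather than equalling it. Also, points of $A_r$ mapped into it automatically have $\nabla u\neq 0$, which settles the case you left open.
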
 
\begin{proof} As in the proof of \cite[Theorem 1.1]{Brendle2023}, by multiplying the function $\phi$ by a suitable constant, 
we may assume that  
\begin{equation}
 \int_\Omega \left|\nabla \left(e^{\frac{2f}{n}}\phi\right)\right|\,d\operatorname{vol}_f+  \int_{\partial \Omega} e^{\frac{2f}{n}}\phi \,d\sigma_{f} = n \int_\Omega \phi^{\frac{n}{n-1}} \,d\operatorname{vol}_f.   \label{int-equality}
\end{equation}
Let $u$ be a solution to the following PDE.
$$\begin{cases}
    \mathrm{div} \left( e^{-\left(1-\frac{2}{n} \right)f } \phi \nabla u  \right) =  e^{-f} \left( n  \phi^{\frac{n}{n-1}} - \left|\nabla \left( e^{\frac{2f}{n}}\phi \right)\right|\right) &\mbox{ in }\Omega\\
    u_\nu =1 &\mbox{ on  }\partial \Omega
\end{cases}.$$
The compatibility condition for this Neumann problem follows from \eqref{int-equality}.
Thus the standard existence theory for uniformly elliptic Neumann problems gives a solution $u$, unique up to an additive constant. Since coefficients are continuous, elliptic regularity gives $u \in C^{2,\alpha}(\overline{\Omega})$.

Let  \begin{equation} U = \{x \in \Omega\,| \, 0< |\nabla u(x)| <1\}.  \label{U} \end{equation}
\begin{lem} \label{Delta_f est}
    For $x \in U$, we have $e^{\frac{2f}{n}}  \Delta_f u\leq n \phi^{\frac{1}{n-1}}$.
\end{lem}
\begin{proof}
Note that
\begin{align*}
    \mathrm{div} \left( e^{-\left(1-\frac{2}{n} \right)f } \phi \nabla u  \right) 
    & = e^{-\left(1-\frac{2}{n} \right)f } \phi \Delta u  + \langle \nabla (e^{-\left(1-\frac{2}{n}\right)f } \phi),\nabla u \rangle\\
    % & = e^{-\left(1-\frac{2}{n} \right)f } \phi \Delta u  + e^{-f} \langle \nabla (e^{\frac{2}{n}f}\phi ),\nabla u\rangle 
    % + e^{-f} e^{\frac{2f}{n}}\phi \langle \nabla (-f), \nabla u\rangle\\
    & = e^{-(1-\frac{2}{n})f}\phi \Delta_f u + e^{-f} \langle \nabla (e^{\frac{2}{n}f}\phi ),\nabla u\rangle.
\end{align*}
Thus, from the given PDE we have
$$e^{\frac{2f}{n}} \phi \Delta_f u = \left( n  \phi^{\frac{n}{n-1}} - \left|\nabla \left( e^{\frac{2f}{n}}\phi \right)\right|\right)-\langle \nabla (e^{\frac{2}{n}f}\phi ),\nabla u\rangle \leq n\phi^{\frac{n}{n-1}}$$
because $|\nabla u(x)| <1$.
\end{proof}
For each $r >0$, define a map
$$\Phi_r ( x) : = \exp_x (r \nabla u (x)): \Omega \to M$$
and a set
$$A_r : = \left\{x\in U \, \left|\,  ru(y) + \frac{1}{2} d(y, \exp_x (r \nabla u (x)))^2 \geq ru(x) + \frac{r^2}{2} |\nabla u(x)|^2\right.\forall y \in \Omega\right\}.$$

Recall the following lemma from \cite[Lemma 2.2]{chong2025sobolev}, see also \cite[Lemma 2.2]{Brendle2023}. 
\begin{lem}\label{lem:sobolev-image}
    The set $\{q \in M \setminus \Omega\, |\, d(x,q) <r \text{ for all } x \in \Omega \} $ is contained in $\Phi_r(A_r)$.
\end{lem}
% \begin{proof}[Proof of Lemma~\ref{lem:sobolev-image}]
%     Proof does not dependent on curvature assumption. It uses the contact set property (variation argument).
% \end{proof}

Fix $r>0$ and $x \in A_r$. Let $\{e_1, \ldots, e_n\}$ be an orthonormal basis for $T_x M$ with $e_1 = \frac{\nabla u}{|\nabla u|}.$ Note that $\nabla u(x) \not= 0$ by our definition of $U$ in \eqref{U}. 
For each $1 \leq i \leq n$, define the Jacobi fields $X_i$ along $\bar \gamma(t) = \exp_x ( t \nabla u(x))$ with initial conditions
$$X_i(0) = e_i, \quad X_i'(0) = \nabla_{e_i}\nabla u (x).$$
Let $E_i(t)$ be the parallel vector field along $\bar \gamma(t)$ with $E_i(0) = e_i$.

Define $n\times n $ matrices $P(t)$ and $R(t)$ by $$P_{ij}(t) = \langle X_i(t), E_j(t)\rangle, \quad R_{ij}(t) = R(E_i(t), \bar \gamma'(t), E_j(t), \bar \gamma'(t)).$$
Then the Jacobi equation gives
$$P'' + PR = 0, \quad P(0) = I_n, \quad P'(0) = \mathrm{Hess} \,u(x).$$
As shown in  \cite[Lemma 5.6]{brendle2026} implies that for $x \in A_r$ the Jacobi fields $X_i$ are independent and therefore $P(t)$ is invertible. 

Let $Q(t) = P^{-1}(t) P'(t)$. Then $Q$ is symmetric and satisfies the Riccati equation
$$Q'(t) + Q^2 (t) = -R(t), \quad Q(0) = \operatorname{Hess} u(x ).$$

We first use a one-dimensional Riccati comparison for the component of $Q(t)$ in the direction of $\nabla u(x)$, and then use a separate comparison for the trace of $Q(t)$ restricted to the orthogonal complement of this direction.
Combining these two estimates will give the desired bound for $\tr Q(t)$.

Since $R_{11}(t) = 0$ and $(Q_{11})^2 \leq (Q^2)_{11}$, we have
$$Q_{11}'(t) + Q_{11}^2 (t) \leq 0, \quad Q_{11}(0) = \operatorname{Hess} u(e_1,e_1).$$
By direct computation, we have
$$Q_{11}(t) \leq \frac{\operatorname{Hess} u (e_1,e_1)}{ (1+ \operatorname{Hess} u (e_1,e_1) t)_+}.$$

Define
$q(t) = \sum_{i=2}^n Q_{ii}(t)$.
By the Cauchy--Schwarz inequality,
$$q'(t) + \frac{1}{n-1}q(t)^2 \leq - \sum_{i=2}^n R(E_i(t), \bar \gamma'(t), E_i(t), \bar \gamma'(t)) = - \mathrm{Ric}(\bar \gamma'(t), \bar \gamma'(t))$$
with $q(0) = \Delta u(x) -\operatorname{Hess} u(e_1,e_1)$.

Define
\[
q_f(t):=q(t)-f'(t),
\]
where \(f(t):=f(\bar \gamma(t))\).
Then we have
\begin{align*}
q_f'(t)
%&=q'(t)-f''(t)\\
&\le -\Ric(\bar \gamma'(t),\bar \gamma'(t))
    -\frac{1}{n-1}(q_f(t)+f'(t))^2
    -f''(t)\\
   %  &= -\Ric_f^1(\bar \gamma'(t),\bar \gamma'(t))
   % +\frac{1}{n-1}(f'(t))^2
   % -\frac{1}{n-1}\bigl(q_f(t)^2+2q_f(t)f'(t)+(f'(t))^2\bigr)\\
&= -\frac{2}{n-1}f'(t) q_f(t)  -\frac{1}{n-1}q_f(t)^2 -\Ric_f^1(\bar \gamma'(t),\bar \gamma'(t)).
\end{align*}

Define \[
\lambda(t):=e^{\frac{2f(t)}{n-1}}q_f(t).
\]
Then
\begin{align*}
\lambda'(t)
&\leq e^{\frac{2f(t)}{n-1}}
   \left(
      -\Ric_f^1(\bar \gamma'(t),\bar \gamma'(t))
      -\frac{1}{n-1}q_f(t)^2
   \right) \\
   &\leq -\frac{1}{n-1}q_f(t)^2 e^{\frac{2f(t)}{n-1}}= -\frac{1}{n-1}e^{-\frac{2f(t)}{n-1}}\lambda^2(t).
\end{align*}
We recall that
\[
s_x(t)=\int_0^t e^{-\frac{2f(\bar \gamma_x(\tau))}{n-1}}\,d\tau.
\]
Then
\[
\frac{ds_x}{dt}=e^{-\frac{2f(\bar \gamma_x(t))}{n-1}},
\qquad
\frac{dt_x}{ds}=e^{\frac{2f(\bar\gamma_x(t(s)))}{n-1}},
\]
where \(t_x(s)\) is the inverse function of \(s_x(t)\).
Set
\[
\tilde\lambda(s):=\lambda(t(s)).
\]

Then
\begin{align*}
\tilde\lambda'(s)
=\lambda'(t(s))\frac{dt}{ds}
\le-\frac{1}{n-1}\tilde\lambda(s)^2
\end{align*}
with
\begin{equation} \tilde \lambda (0) = e^{\frac{2f(x)}{n-1}}(\Delta u (x) -\operatorname{Hess} u (e_1, e_1) -\langle \nabla f, \nabla u(x)\rangle).  \label{lambda-initial2}
\end{equation}
By comparison, we get
$$\tilde \lambda (s) \leq \frac{\tilde \lambda(0)}{\left(1+ \frac{\tilde\lambda(0)}{n-1}s\right)_+}.$$

Combining the above estimates, we have 
\begin{align*}
\frac{d}{ds}
\bigl(
\log\bigl(e^{-f(t(s))}\det P(t(s))\bigr)
\bigr)
&=
\frac{d}{ds}
\left(
-f(t(s))+\log(\det P(t(s)))
\right)\\
% &=
% -f'(t(s))\frac{dt}{ds}
% +\tr(Q(t(s)))\frac{dt}{ds}\\
&=
q_f(t(s))e^{\frac{2f(t(s))}{n-1}} + Q_{11}(t(s)) \frac{dt}{ds}\\
% &=
% \tilde\lambda(s) + Q_{11}(t(s)) \frac{dt}{ds}\\
&\leq\frac{\tilde \lambda(0)}{1+\frac{\tilde\lambda(0)}{n-1}s} + \frac{\operatorname{Hess} u (e_1,e_1)}{ 1+\operatorname{Hess} u (e_1,e_1) t(s)} \frac{dt}{ds}\\
&=
\frac{d}{ds}\log \left(\left( 1+ \frac{\tilde \lambda (0)}{n-1} s\right)^{n-1}_+\left( 1+\operatorname{Hess} u (e_1,e_1) t(s)\right)_+\right).
\end{align*}
That is,
\[
s\mapsto
\frac{e^{-f(t(s))}\det P(t(s))}{\left( 1+ \frac{\tilde \lambda (0)}{n-1} s\right)^{n-1}_+ \left( 1+ \operatorname{Hess} u (e_1,e_1) t(s)\right)_+}
\]
is monotone decreasing. 

Since
$$\lim_{s\to 0^+} \frac{e^{-f(t(s))}\det P(t(s))}{\left( 1+ \frac{\tilde \lambda (0)}{n-1} s\right)^{n-1}_+ \left( 1+ \operatorname{Hess} u (e_1,e_1) t(s)\right)_+} = e^{-f(x)},$$
we have 

\[
e^{-f(\bar \gamma_x(t_x(s)))}\det P(t_x(s))
\le e^{-f(x)}
\left( 1+ \frac{\tilde \lambda (0)}{n-1} s\right)^{n-1}_+ \left( 1+ \operatorname{Hess} u (e_1,e_1) t_x(s)\right)_+.
\]

By substituting $s = s_x(t)$, we have
$$e^{-f(t)}\det P(t) \leq e^{-f(x)}\left( 1+ \frac{\tilde \lambda (0)}{n-1} s_x(t)\right)^{n-1}_+ \left( 1+ \operatorname{Hess} u (e_1,e_1) t\right)_+.$$
Using AM-GM inequality \begin{equation}
    a^{n-1}b \leq \left(\frac{n-1}{n}a + \frac{1}{n}b\right)^{n} \label{AMGM-inequality}
\end{equation}
for $a ,b\geq 0$,
and \eqref{lambda-initial2},  we have 
{\small
\begin{align*}
    &e^{-f(t)}\det P(t) \\
    &\leq e^{-f(x)}\left( 1+ \frac{e^{\frac{2f(x)}{n-1}}(\Delta u (x) -u_{11}(x)\, -\langle \nabla f, \nabla u(x)\rangle)}{n-1} s_x(t)\right)^{n-1} \left( 1+ u_{11}(x)\, t\right)\\
    & = e^{-f(x)}ts_x(t)^{n-1}e^{2f(x)}\left( \frac{e^{-\frac{2f(x)}{n-1}}}{s_x(t)}+ \frac{(\Delta_f u (x) -u_{11}(x) )}{n-1} \right)^{n-1} \left( \frac{1}{t}+ u_{11}(x) \right)\\
    &\leq e^{-f(x)}ts_x(t)^{n-1}e^{2f(x)}
    \left(\frac{n-1}{n} \left(\frac{e^{-\frac{2f(x)}{n-1}}}{s_x(t)}+ \frac{(\Delta_f u (x) -u_{11}(x) )}{n-1} \right) + \frac{1}{n} \left(\frac{1}{t}+ u_{11}(x) \right)\right)^n\\
    % & = e^{-f(x)}ts_x(t)^{n-1}e^{2f(x)}
    % \left(\frac{n-1}{n} \frac{e^{-\frac{2f(x)}{n-1}}}{s_x(t)}+ \frac{(\Delta u (x) -\operatorname{Hess} u (e_1, e_1) -\langle \nabla f, \nabla u(x)\rangle)}{n} +  \frac{1}{nt}+ \frac{1}{n}\operatorname{Hess} u (e_1,e_1) \right)^n\\
    % & = e^{-f(x)}ts_x(t)^{n-1}e^{2f(x)}
    % \left(\frac{n-1}{n} \frac{e^{-\frac{2f(x)}{n-1}}}{s_x(t)}+ \frac{(\Delta u (x)  -\langle \nabla f, \nabla u(x)\rangle)}{n} +  \frac{1}{nt}\right)^n\\
    &= e^{-f(x)}ts_x(t)^{n-1}e^{2f(x)}
    \left(\frac{n-1}{n} \frac{e^{-\frac{2f(x)}{n-1}}}{s_x(t)}+ \frac{(\Delta_f u (x)  )}{n} +  \frac{1}{nt}\right)^n\\
    & = e^{-f(x)}ts_x(t)^{n-1} \left( \frac{n-1}{n} \frac{e^{\frac{-2f(x)}{n(n-1)}}}{s_x(t)} + \frac{(\Delta_f u(x)) e^{\frac{2f(x)}{n}}}{n} + \frac{e^{\frac{2f(x)}{n}}}{nt}\right)^n
\end{align*}
}
Then the area formula with Lemma~\ref{lem:sobolev-image} gives
\begin{align*}
    &\operatorname{vol}_f(\{q \in M \setminus \Omega\, |\, d(x,q) <r \text{ for all } x \in \Omega \})\\
    & \leq \int_{A_r} |\operatorname{det} D\Phi_r(x)|e^{-f(\Phi_r(x))}dvol_M(x)\\
    & = \int_{A_r} \operatorname{det} P(r) e^{-f(r)} dvol_M(x)\\
    % & \leq \int_{A_r} e^{-f(x)} rs_x(r)^{n-1} \left( \frac{n-1}{n} \frac{e^{\frac{-2f(x)}{n(n-1)}}}{s_x(r)} + \frac{(\Delta_f u(x)) e^{\frac{2f(x)}{n}}}{n} + \frac{e^{\frac{2f(x)}{n}}}{nr}\right)^n dvol_M(x)\\
    & \leq \int_{\Omega}  e^{-f(x)}rs_x(r)^{n-1} \left( \frac{n-1}{n} \frac{e^{\frac{-2f(x)}{n(n-1)}}}{s_x(r)} + \frac{(\Delta_f u(x)) e^{\frac{2f(x)}{n}}}{n} + \frac{e^{\frac{2f(x)}{n}}}{nr}\right)^n dvol_M(x)
\end{align*}

Note that
\begin{align*}
    &\limsup_{r\to \infty}\frac{\int_{\Omega} e^{-f(x)} rs_x(r)^{n-1} \left( \frac{n-1}{n} \frac{e^{\frac{-2f(x)}{n(n-1)}}}{s_x(r)} + \frac{(\Delta_f u(x)) e^{\frac{2f(x)}{n}}}{n} + \frac{e^{\frac{2f(x)}{n}}}{nr}\right)^n dvol_M(x)}{r^n}\\
    & \leq \int_\Omega e^{-f(x)}\limsup_{r \to \infty} \left( \frac{s_x(r)}{r}\right)^{n-1}\left(\frac{(\Delta_f u(x)) e^{\frac{2f(x)}{n}}}{n}\right)^n\, dvol_M(x)\\
    & \leq \int_\Omega e^{-f(x)} \left(e^{-\frac{2\inf_M f}{n-1}}\right)^{n-1}\left(\frac{(\Delta_f u(x)) e^{\frac{2f(x)}{n}}}{n}\right)^n\, dvol_M(x)\\
    & \leq e^{-2\inf_M f}\int_\Omega e^{-f(x)}\left(\frac{(\Delta_f u(x)) e^{\frac{2f(x)}{n}}}{n}\right)^n\, dvol_M(x)
\end{align*}
where we used the property $f$ is bounded below.

%Since $$\Delta_f u(x) e^{\frac{2f(x)}{n}} \leq n\phi^{\frac{1}{n-1}},$$
Using Lemma~\ref{Delta_f est} above gives 
$$\limsup_{r\to \infty}\frac{\operatorname{vol}_f(\{q \in M \setminus \Omega\, |\, d(x,q) <r \text{ for all } x \in \Omega \})}{r^n} \leq  e^{-2\inf_M f}\int_\Omega \phi^{\frac{n}{n-1}}dvol_f(x).$$
% Then for some $r_0>0$ we have
% $$\theta_f = \limsup_{r \to \infty} \frac{\operatorname{vol}_f(B_x(r-r_0))}{r^n} \leq e^{-2\inf_M f} \int_\Omega \phi^{\frac{n}{n-1}}dvol_f(x),$$
% %where $\theta := \limsup_{r \to \infty}\frac{\operatorname{vol}_f(B_x(r))}{r^n}$.
 That is,
$$\int_\Omega \phi^{\frac{n}{n-1}}d vol_f (x) \geq \theta_f \cdot e^{2 \inf_Mf}.$$
Substituting this into \eqref{int-equality}, we have
\begin{align*}
    \int_\Omega \left|\nabla \left(e^{\frac{2f}{n}}\phi\right)\right|dvol_f +  \int_{\partial \Omega} e^{\frac{2f}{n}}\phi \,d\sigma_{f}
    %& = n \int_\Omega \phi^{\frac{n}{n-1}} dvol_f\\
    & = n \left(\int_\Omega \phi^{\frac{n}{n-1}} dvol_f\right)^\frac{1}{n} \left(\int_\Omega \phi^{\frac{n}{n-1}} dvol_f \right)^{\frac{n-1}{n}}\\
    & \geq n \,\theta_f^\frac{1}{n} e^{\frac{2}{n}\inf_Mf}\left(\int_\Omega \phi^{\frac{n}{n-1}} dvol_f \right)^{\frac{n-1}{n}}.
\end{align*}

\end{proof}

By plugging in $\phi = e^{-\frac{2f}{n}}$ into \eqref{Sobolev-inequa}, we get Theorem~\ref{Iso inequa thm}.

\bibliographystyle{plain}
\bibliography{willmore}

\end{document}